\documentclass[a4paper]{amsart}

\usepackage{hyperref}
\usepackage[all]{xy}
\usepackage[usenames]{color}
\usepackage{lscape}
\usepackage{amsmath}
\usepackage{amssymb}

\definecolor{mint}{rgb}{0.13, 0.67, 0.54}
\def\white{\color{white}}
\def\red{\color{red}}

\textwidth16cm
\textheight23cm
\evensidemargin.2cm
\oddsidemargin.2cm

\newtheorem{proposition}{Proposition}
  \newtheorem{lemma}[proposition]{Lemma}
\newtheorem*{lemma*}{Lemma}
  \newtheorem{theorem}[proposition]{Theorem}
\theoremstyle{definition}
  \newtheorem{definition}[proposition]{Definition}
  \newtheorem*{definition*}{Definition}
  \newtheorem{example}[proposition]{Example}
  \newtheorem*{example*}{Example}

\def\ox{\otimes}

\def\stac#1{\raisebox{-4pt}{${\stackrel{\ox}{{}_{#1}}}$}}
\def\psiinv{\phi}
\def\flip{\mathsf{\,tw\,}}
\def\tensor#1{\otimes_{#1}}

\begin{document}
\title{On the double crossed product of weak Hopf algebras}
\author{Gabriella B\"ohm} 
\address{Wigner Research Centre for Physics, Budapest,
H-1525 Budapest 114, P.O.B.\ 49, Hungary}
\email{bohm.gabriella@wigner.mta.hu}
\author{Jos\'e G\'omez-Torrecillas}
\address{Departamento de \'Algebra,  Universidad de Granada,
           E-18071 Granada, Spain}
\email{gomezj@ugr.es}
\begin{abstract}
Given a weak distributive law between algebras underlying two weak bialgebras,
we present  sufficient conditions under which the corresponding
weak wreath product algebra becomes a weak bialgebra with respect to the
tensor product coalgebra structure. When the weak bialgebras are weak Hopf
algebras,  then the same conditions are shown to imply that the weak
wreath product becomes a weak Hopf algebra, too. Our sufficient
conditions are capable to describe most known examples, (in
particular the Drinfel'd double of a weak Hopf algebra) . 
\end{abstract}
\date{}
\maketitle

\thispagestyle{empty}
\section*{Introduction}
Bialgebras can be regarded as algebras in the monoidal category
$\mathsf{coalg}$ of coalgebras. Hence a distributive law in $\mathsf{coalg}$
-- that is, a distributive law between the underlying algebras of two
bialgebras, which is also a homomorphism of coalgebras -- induces a
wreath product bialgebra; with the multiplication twisted by the distributive
law and the tensor product comultiplication. This is known as Majid's double
crossed product construction \cite{Majid}.  

More generally, applying the construction of {\em weak} wreath product in any
monoidal category  with split idempotents \cite{BGT}, one can take
a {\em weak} distributive law in $\mathsf{coalg}$ -- that is, a weak
distributive law between algebras underlying bialgebras, which is a coalgebra
homomorphism. It yields an algebra in $\mathsf{coalg}$; that is,
a bialgebra again.  (The simplest kind of examples is given as follows: Let $A$
be a bialgebra over a commutative ring $k$ in which there exists a grouplike
$e \in A$ such that $ea=eae$ for every $a \in A$. Then the map $\Psi : A \cong
A \otimes k \to k \otimes A \cong A$ defined by $\Psi (a) = ea$ for $a \in A$
is a weak distributive law in $\mathsf{coalg}$. The corresponding  weak wreath
product is isomorphic to $eA$, which is a bialgebra with unit $e$. A minimal
proper example of this construction is $A = kS$, the monoid (bi)algebra of the
monoid $S = \{ e, 1 \}$ with multiplication $e^2 = e 1 = 1 e = e, 1^2 = 1$.)  

The aim of this paper is to study double crossed products of {\em weak}
bialgebras. By this we mean weak bialgebras which -- {\em as algebras} --
arise as a weak wreath product of two weak bialgebras, and whose {\em
  coalgebra} structure comes from the tensor product coalgebra. (Note that
this does not fit the construction in \cite{santi}, where both the algebra and
coalgebra structures are twisted by weak distributive laws of a {\em common}
image.) 

The difficulty of the problem comes from the fact that no description of weak
bialgebras as algebras in some well-chosen monoidal category is known. Hence
there is no evident notion of (weak) wreath product of weak bialgebras. On the
other hand, many examples of double crossed product weak bialgebras (in the
above sense) are known. 

Our strategy is to take a weak distributive law between algebras underlying
weak bialgebras. Then we look for sufficient conditions under which the
corresponding weak wreath product algebra becomes a weak bialgebra with
respect to the tensor product coalgebra structure.  
The conditions we present are only sufficient for the desired weak bialgebra
to exist. Although it is possible to give the sufficient and necessary
conditions, they are technically involved and so do not seem to be usable in
practice. Our sufficient conditions, however, have a simple form and they are
capable to describe the known examples (in particular the Drinfel'd double of a
weak Hopf algebra \cite{Bohm:DH,Nenciu,CWY}). 
\bigskip

A {\em weak bialgebra} \cite{BNSz:WHAI} over a commutative ring $k$ is a
$k$-module $H$ equipped with a $k$-algebra structure $(\mu,\eta)$ and a
$k$-coalgebra structure $(\Delta, \epsilon)$, subject to the following axioms, 
$$
\xymatrix{
H^{\otimes 2}\ar[r]^-\mu\ar[d]^-{\Delta\otimes \Delta}&
H\ar[dd]_-\Delta&
k\ar[rr]^-{\eta\ox \eta}\ar[d]^-{\eta\ox \eta} \ar@/^.5pc/[rd]^-\eta&&
H^{\otimes 2}\ar[d]_-{\Delta\otimes \Delta}&
H^{\otimes 3}\ar[rr]^-{H\otimes \Delta^{op}\otimes H}
\ar[d]^(.7){\!H\scalebox{.5}{$\otimes$}\Delta\scalebox{.5}{$\otimes$} H}
\ar@/^.5pc/[rd]^-{\mu^2}&&
H^{\otimes 4}\ar[d]_-{\mu \ox \mu}\\
H^{\otimes 4}\ar[d]^-{H\otimes \flip\otimes H}&&
H^{\otimes 2}\ar[d]^-{\Delta\otimes \Delta}&
H\ar@/_.5pc/[rd]_-{\Delta^2}&
H^{\otimes 4}\ar[d]_(.3){H{\scalebox{.5}{$\otimes$}}
\mu\scalebox{.5}{$\otimes$}  H}&
H^{\otimes 4}\ar[d]^-{\mu \ox \mu}&
H\ar@/_.5pc/[rd]_-\epsilon&
H^{\otimes 2}\ar[d]_-{\epsilon\otimes \epsilon}\\
H^{\otimes 4}\ar[r]_-{\mu \ox \mu}&
H^{\otimes 2}&
H^{\otimes 4}\ar[rr]_-{H\otimes \mu^{op}\otimes H}&&
H^{\otimes 3}&
H^{\otimes 2}\ar[rr]_-{\epsilon\otimes \epsilon}&&
k}
$$
where $\flip:H\ox H\to H\ox H$ is the twist map $a\ox b\mapsto b\ox a$,
$\mu^{op}=\mu\flip$ and $\Delta^{op}=\flip\Delta$. 
Note, however, that unitality of the comultiplication; that is, $\Delta \eta= \eta\ox \eta$ is {\em not} required. On elements $a,b,c$ of
$H$, the axioms take the following form. 
\begin{eqnarray}
\Delta(ab)&=&\Delta(a)\Delta(b)\label{eq:Delta_mu}\\
(\Delta(1)\ox 1)(1\ox \Delta(1))=&\Delta^2(1)&=
(1\ox \Delta(1))(\Delta(1)\ox 1)\label{eq:Delta(1)}\\
\epsilon(ab_1)\epsilon(b_2c)=&\epsilon(abc)&=
\epsilon(ab_2)\epsilon(b_1c).\label{eq:epsilon_mu}
\end{eqnarray}
Here we are using a simplified version of Heynemann-Sweedler's notation:
$\Delta(a) = a_1 \otimes a_2$, implicit summation understood.
The (in fact idempotent) maps $H\to H$,
$$
\sqcap^R:a\mapsto 1_1\epsilon(a1_2),\quad 
\sqcap^L:a\mapsto \epsilon(1_1a)1_2,\quad 
\overline\sqcap^R:a\mapsto 1_1\epsilon(1_2a),\quad 
\overline\sqcap^L:a\mapsto \epsilon(a1_1)1_2,
$$
play an important role. Recall from \cite{BNSz:WHAI} that they obey, for
example, the following identities. For all $a,b\in H$, 
\begin{equation}\label{eq:wba_id} 
1_1b\ox 1_2 = b_1\ox \sqcap^L(b_2) \qquad \textrm{and}\qquad
\epsilon(a_1b)a_2=a \sqcap^L(b).
\end{equation}
By \cite[Lemma 1.2]{BCJ}, axioms \eqref{eq:epsilon_mu}
can be equivalently replaced by 
\begin{equation}\label{eq:alternative_ax}
\epsilon(ab_2)b_1=\sqcap^R(a)b
\quad\textrm{and}\quad 
\epsilon(ab_1)b_2=\overline{\sqcap}^L(a)b.
\end{equation}
A weak bialgebra is said to be a {\em weak Hopf algebra} if there exists a
linear map $S:H \to H$ rendering commutative the following diagrams. 
$$
\xymatrix@R=15pt @C=45pt {
H\ar[r]^-{H\otimes \Delta\circ \eta}\ar[d]^-\Delta&
H^{\otimes 3}\ar[dd]_(.4){\epsilon\circ \mu^{op}\otimes H}&
H\ar[r]^-{\Delta\circ \eta\otimes H}\ar[d]^-\Delta&
H^{\otimes 3}\ar[dd]_(.4){H\otimes \epsilon\circ \mu^{op}}&
H\ar[r]^-S\ar[dd]^-{\Delta^2}&
H\\
H^{\otimes 2}\ar[d]^-{H\otimes S}&&
H^{\otimes 2}\ar[d]^-{S\otimes H}\\
H^{\otimes 2}\ar[r]_-\mu&
H&
H^{\otimes 2}\ar[r]_-\mu&
H&
H^{\otimes 3}\ar[r]_-{S\otimes H \otimes S\ }&
H^{\otimes 3}\ar[uu]^-{\mu^2}}
$$
On elements $a\in H$,
\begin{equation}\label{eq:antip_ax}
a_1S(a_2)=\sqcap^L(a)\qquad\qquad\qquad\qquad
S(a_1)a_2=\sqcap^R(a)\qquad\qquad\qquad\qquad
S(a_1)a_2S(a_3)=S(a).
\end{equation}
For more on weak bialgebras and weak Hopf algebras, we refer to
\cite{BNSz:WHAI}.  
\bigskip

The paper is organized as follows. In Section \ref{sec:d_cr} we define weakly
invertible weakly comonoidal weak distributive laws between weak
bialgebras. We prove that they induce double crossed product weak
bialgebras. In Section \ref{sec:antip} we prove that starting from two weak
{\em Hopf} algebras, under the same conditions as in Section
\ref{sec:d_cr} we obtain a double crossed product weak {\em Hopf}
algebra. In final Section \ref{sec:ex}, we collect a number of examples of
double crossed product weak bialgebras and weak Hopf algebras, and show how
they fit our theory. For the convenience of the reader, we collected in an
Appendix the (sometimes big) diagrams that are used in the proofs of Sections
\ref{sec:d_cr} and \ref{sec:antip}.  
\bigskip

{\bf Acknowledgements.} GB thanks the members of Departamento de
\'Algebra at Universidad de Granada for a generous invitation and for a very
warm  hospitality experienced during her visit in February of 2012. 
Partial financial support from the Hungarian Scientific Research Fund OTKA,
grant no. K68195, and from the Ministerio de Ciencia e Innovaci\'on and FEDER,
grant MTM2010-20940-C02-01 is gratefully acknowledged.

\section{Double crossed product of weak bialgebras}\label{sec:d_cr}

The following definition, and the corresponding product construction of
algebras, appeared in \cite{CDG} and \cite{Str:Wdl}. 
\begin{definition}
For algebras $(A,\mu,\eta)$ and $(B,\mu,\eta)$ over a commutative ring $k$, a
{\em weak distributive law} is a $k$-module map $\psi:A \ox B\to B \ox A$
subject to the following conditions. 
\begin{equation}\label{eq:wdl}
\begin{array}{ll}
\psi(\mu\ox B)=(B\ox \mu)(\psi\ox A)(A\ox \psi)\quad
&\psi(\eta \ox B)=(\mu\ox A)(B\ox \psi)(B \ox \eta \ox \eta)\\
\psi(A\ox \mu)=(\mu\ox A)(B\ox \psi)(\psi\ox B)\quad
&\psi(A\ox \eta)=(B\ox \mu)(\psi\ox A)(\eta \ox \eta \ox A).
\end{array}
\end{equation}
The two conditions on the right can be replaced equivalently by 
$$
(B\ox \mu)(\psi\ox A)(\eta \ox  B \ox A)=
(\mu\ox A)(B\ox \psi)(B \ox  A \ox \eta).
$$
\end{definition}

Consider a weak distributive law $\psi:A\ox B \to B\ox A$ between algebras $A$
and $B$ over a commutative ring $k$.
\begin{definition}
We say that a weak distributive law $\psiinv:B\ox A\to A\ox B$ is the
{\em weak inverse} of $\psi$ if
\begin{equation}\label{eq:inverse}
\psi\,\psiinv=(\mu\ox A)\, (B\ox \psi)\, (B\ox A\ox \eta)
\quad \textrm{and}\quad 
\psiinv\,\psi=(\mu\ox B)\, (A\ox \psiinv)\, (A\ox B\ox \eta).
\end{equation}
\end{definition}
The weak inverse of $\psi$ is not unique and it may not exist. If it exists
then it obeys
$$
\psiinv\,\psi\,\psiinv=
(\mu\ox B)\, (A\ox \psiinv)\, (A\ox B\ox \eta)\,\psiinv=
\psiinv\quad \textrm{and}\quad
\psi\,\psiinv\,\psi=\psi.
$$
If $\psi$ is a proper distributive law then it has a weak inverse which is
also a proper distributive law if and only if $\psi$ is a bijective map. Such
an inverse is clearly unique (if it exists).

Recall that for coalgebras $(A,\Delta,\epsilon)$ and
$(B,\Delta,\epsilon)$ over a commutative ring $k$, the $k$-module tensor
product $A\ox B$ is also a coalgebra via 
$$
\Delta_{A\ox B} = (A\ox \flip\ox B)(\Delta \ox \Delta)
\qquad \textrm{and}\qquad 
\epsilon_{A\ox B} = \epsilon \ox \epsilon.
$$
Symmetrically, $B\ox A$ is a coalgebra too. 

Let both $A$ and $B$ carry algebra structures $(\mu,\eta)$ and coalgebra
structures $(\Delta,\epsilon)$ over a commutative ring $k$ (no
compatibility is assumed at this stage). 
Consider a mutually weak inverse pair of weak distributive laws
$(\psi:A\ox B\to B\ox A, \psiinv:B\ox A\to A\ox B)$. 

\begin{definition} \label{def:weak_comon}
We say that the pair $(\psi,\psiinv)$ is {\em weakly comonoidal} if the
following equalities hold. 
\begin{eqnarray}
(\psi\,\psiinv\ox B\ox A)\, \Delta_{B\ox A}\,\psi&=
(\psi\ox\psi)\, \Delta_{A\ox B}=&
(B\ox A\ox \psi\,\psiinv)\, \Delta_{B\ox A}\,\psi
\label{eq:psibar}\\
(\psiinv\,\psi\ox A\ox B)\, \Delta_{A\ox B}\,\psiinv&=
(\psiinv\ox\psiinv)\, \Delta_{B\ox A}=&
(A\ox B\ox \psiinv\,\psi)\, \Delta_{A\ox B}\,\psiinv
\label{eq:psi}\\
&\epsilon_{B\ox A}\, \psi=\epsilon_{A\ox B}\, \psiinv\, \psi.&
\label{eq:counit}
\end{eqnarray}
\end{definition}
Clearly, \eqref{eq:counit} is equivalent to $\epsilon_{A\ox B}\,
\psiinv=\epsilon_{B\ox A}\, \psi\, \psiinv$.  
If $\psi$ is an invertible distributive law, then it is weakly comonoidal if
and only if it is a coalgebra homomorphism.  

\begin{theorem}\label{thm:product_wba}
Consider weak bialgebras $A$ and $B$ over a commutative ring $k$. For a
weakly comonoidal mutually weak inverse pair of weak distributive
laws $(\psi:A\ox B\to B\ox A, \psiinv:B\ox A\to A\ox B)$, the following
hold. 
\begin{itemize}
\item[{(1)}] The image $B\ox_\psi A$ of the idempotent map 
$\psi\psiinv:B\ox A \to B \ox A$ is an algebra via the weak wreath product
construction. 
\item[{(2)}] $B\ox_\psi A$ is a coalgebra via the comultiplication
$$
\xymatrix{
B\ox_\psi A\ \ar@{>->}[r]&B\ox A\ar[r]^-{\Delta_{B\ox A}}&
B\ox A\ox B\ox A\ar@{->>}[r]&(B\ox_\psi A)\ox (B\ox_\psi A)
}
$$
and the counit
$$
\xymatrix{
B\ox_\psi A \ \ar@{>->}[r]&B\ox A\ar[r]^-{\epsilon_{B\ox A}}&k,
}
$$
defined in terms of the tensor product coalgebra $B\ox A$.
\item[{(3)}] The algebra in part (1) and the coalgebra in part (2) 
constitute a weak bialgebra.
\end{itemize}
\end{theorem}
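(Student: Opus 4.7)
Set $e:=\psi\psiinv$ and denote by $\iota\colon B\otimes_\psi A\hookrightarrow B\otimes A$ and $\pi\colon B\otimes A\twoheadrightarrow B\otimes_\psi A$ the maps splitting the idempotent $e$, so that $\pi\iota=\mathrm{id}$ and $\iota\pi=e$. For part (1), I would simply invoke the weak wreath product construction of \cite{BGT}: since $(\psi,\psiinv)$ is a weakly invertible pair of weak distributive laws, $e$ is a split idempotent on $B\otimes A$ whose image $B\otimes_\psi A$ carries an algebra structure with multiplication the restriction of $(\mu\otimes\mu)(B\otimes\psi\otimes A)$ and unit $e(\eta\otimes\eta)$. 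No new computation is needed here.

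For part (2), the key step is to derive from \eqref{eq:psibar}, \eqref{eq:psi} and the identities $\psi\psiinv\psi=\psi$, $\psiinv\psi\psiinv=\psiinv$ noted above the compatibility
\[
(e\otimes e)\,\Delta_{B\otimes A}\;=\;(\psi\otimes\psi)\,\Delta_{A\otimes B}\,\psiinv\;=\;(e\otimes B\otimes A)\,\Delta_{B\otimes A}\,e\;=\;(B\otimes A\otimes e)\,\Delta_{B\otimes A}\,e.
\]
Concretely, post-composing \eqref{eq:psibar} with $\psiinv$ yields the outer two equalities, while pre-composing the first equality of \eqref{eq:psi} with $\psi\otimes\psi$ and using $\psi\psiinv\psi=\psi$ yields the leftmost one. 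From this compatibility, $\Delta_{B\otimes_\psi A}:=(\pi\otimes\pi)\Delta_{B\otimes A}\iota$ is a coassociative comultiplication by the standard splitting argument for a comultiplicative idempotent; counitality of $\epsilon_{B\otimes_\psi A}:=\epsilon_{B\otimes A}\iota$ is obtained by applying $\epsilon_{B\otimes A}\otimes\pi$ to the two outer identities above (precomposed with $\iota$, and noting $e\iota=\iota$, $\pi e=\pi$) and invoking counitality of $\Delta_{B\otimes A}$.

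For part (3), the three families of weak bialgebra axioms \eqref{eq:Delta_mu}, \eqref{eq:Delta(1)}, \eqref{eq:epsilon_mu} must be verified on $B\otimes_\psi A$. Multiplicativity of $\Delta$ asks that $\Delta_{B\otimes A}$ intertwine the weak wreath multiplication with the componentwise product on $(B\otimes_\psi A)\otimes(B\otimes_\psi A)$: the outer $\mu\otimes\mu$ is handled by multiplicativity of $\Delta$ in the individual weak bialgebras, and the central occurrence of $\psi$ is handled by \eqref{eq:psibar} at the cost of inserting correcting factors of $e$. The axiom on $\Delta^2(1)$ is unfolded through the unit $e(\eta\otimes\eta)$ and reduced to the corresponding axioms in $A$ and $B$ combined with the $e$-compatibility derived in part (2); the twin $\epsilon$ axioms follow analogously from \eqref{eq:counit} and the corresponding counit axioms of $A$ and $B$.

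The main obstacle is multiplicativity of $\Delta$: since the tensor product coalgebra on $B\otimes A$ carries a central twist $\flip$, commuting $\Delta_{B\otimes A}$ past the middle $\psi$ in the wreath multiplication produces several correcting idempotents that must be carefully tracked through the diagram and shown to combine with the wreath structure into the projections defining the product of $\Delta_{B\otimes_\psi A}(x)$ and $\Delta_{B\otimes_\psi A}(y)$. I expect this to be a long diagram chase, consistent with the plan to relegate such diagrams to an appendix.
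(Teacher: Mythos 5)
Your parts (1) and (2) are sound and follow essentially the paper's route. In particular, your derivation of
$(\psi\psiinv\ox \psi\psiinv)\,\Delta_{B\ox A}=(\psi\ox\psi)\,\Delta_{A\ox B}\,\psiinv=(\psi\psiinv\ox B\ox A)\,\Delta_{B\ox A}\,\psi\psiinv=(B\ox A\ox \psi\psiinv)\,\Delta_{B\ox A}\,\psi\psiinv$
from \eqref{eq:psibar}, \eqref{eq:psi} and $\psi\psiinv\psi=\psi$, $\psiinv\psi\psiinv=\psiinv$ is correct and recovers the paper's identity \eqref{eq:quotient}; your counitality argument (replace $\epsilon_{B\ox A}\,\psi\psiinv$ in one tensor factor by $\epsilon_{B\ox A}$ using the outer equalities, then invoke counitality of $\Delta_{B\ox A}$) is a legitimate and in fact slightly slicker variant of the paper's diagram, which instead routes through $A\ox B$ and uses \eqref{eq:counit}.

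Part (3), however, is a plan rather than a proof, and the gap is not merely one of detail. First, for multiplicativity of the comultiplication you correctly anticipate that ``correcting factors of $\psi\psiinv$'' appear, but the whole point is to show they can be absorbed, and this rests on identities you neither state nor prove: that $\psi\psiinv$ is a $B$-$A$ bimodule map, $(\mu\ox\mu)(B\ox\psi\psiinv\ox A)=\psi\psiinv(\mu\ox\mu)$ \eqref{eq:bilin}, and the absorption identities $(\mu\ox A)(B\ox\psi)(\psi\psiinv\ox B)=(\mu\ox A)(B\ox\psi)$ and $(B\ox\mu)(\psi\ox A)(A\ox\psi\psiinv)=(B\ox\mu)(\psi\ox A)$ \eqref{eq:onleft}, \eqref{eq:onright}, all of which follow from \eqref{eq:inverse} and \eqref{eq:wdl} but require proof. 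Second, and more seriously, the claim that the weak counit axioms \eqref{eq:epsilon_mu} ``follow analogously'' is unwarranted: in the paper this is the hardest step, and it is not proved directly at all. One passes to the equivalent axioms \eqref{eq:alternative_ax} of \cite[Lemma 1.2]{BCJ} (the forms $\epsilon(ab_2)b_1=\sqcap^R(a)b$ and $\epsilon(ab_1)b_2=\overline{\sqcap}^L(a)b$), and verifying these for $B\ox_\psi A$ occupies the diagrams \eqref{eq:diagr_2}--\eqref{eq:lhs}, which make essential and nontrivial use of \eqref{eq:psi}, \eqref{eq:counit}, \eqref{eq:wdl} and the absorption identities above. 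Nothing in your sketch indicates how the counit of the product, evaluated on a product of three elements, reduces to the axioms of $A$ and $B$ separately; since $\epsilon_{B\ox A}$ is not multiplicative with respect to the wreath product, this reduction is precisely where the weak comonoidality hypotheses earn their keep, and it must be carried out explicitly.
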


\begin{proof}
Part (1) follows by \cite[Theorem 2.4]{Str:Wdl} (see also \cite[Theorem
1.6]{BGT}). Let us just recall that the multiplication on $B
\ox_\Psi A$ is given by 
$$
\xymatrix @C=15pt{
B \ox_\Psi A \ox B \ox_\Psi A \ \ar@{>->}[r]&
B \ox A \ox B \ox A \ar^-{B \ox \Psi \ox A }[rr]&&
B \ox B \ox A \ox A \ar^-{\mu \ox \mu}[r]& 
B \ox A \ar@{->>}[r] & B \ox_\Psi A}, 
$$ 
and the unit is  
$$ 
\big(
\xymatrix{
k \ar^-{\eta \ox \eta}[r]& 
A \ox B \ar^{\Psi}[r]& 
B \ox A \ar@{->>}[r] & B \ox_\Psi A }
\big) \stackrel{\eqref{eq:inverse}}=\big(
\xymatrix{
k \ar^-{\eta \ox \eta}[r]& 
B \ox A \ar@{->>}[r] & B \ox_\Psi A }
\big).
$$

As for (2) is concerned,  it follows from \eqref{eq:psibar} and
  \eqref{eq:psi} that 
\begin{equation}\label{eq:quotient}
(\psi\,\psiinv \ox\psi\,\psiinv)\,\Delta_{B\ox A}\, \psi\,\psiinv=
(\psi \ox \psi)(\psiinv\psi\ox A \ox B)\Delta_{A\ox B}\psiinv=
(\psi\,\psiinv \ox\psi\,\psiinv)\,\Delta_{B\ox A}.
\end{equation}
(In other words, the epimorphism $B\ox A \to B\ox_\psi
A$ is comultiplicative.) Combining this with the coassociativity of
$\Delta_{B\ox A}$, we conclude on the coassociativity of the
comultiplication on $B\ox_\psi A$. 

Counitality follows by commutativity of the following diagram
$$
\xymatrix @R=15pt @C=50pt{
B\stac \psi A \ \ar@{>->}[r]\ar@{=}@/_7.3pc/[rrrddd]&
B\ox A \ar[r]^-{\Delta_{B\ox A}}\ar[d]^-{\psiinv}
\ar@{}[rdd]|-{\red\eqref{eq:psi}}&
(B\ox A)^{\ox 2}\ar[dd]^-{\psi\psiinv\ox \psiinv}
\ar[rd]^-{(\psi\psiinv)^{\ox 2}}\ar@{->>}[r]
\ar@{}[rdd]|-{\red\eqref{eq:counit}}&
(B\stac \psi A)^{\ox 2} \ar@{>->}[d]\\
&A\ox B\ar[d]^-{\Delta_{A\ox B}}\ar@/_3pc/@{=}[dd]&&
(B\ox A)^{\ox 2}\ar[d]_(.3){B\ox A \ox \epsilon_{B\ox A}}\\
&(A\ox B)^{\ox 2} \ar[d]^-{A\ox B \ox \epsilon_{A\ox B}} 
\ar[r]^-{\psi\ox A \ox B}&
B\ox A\ox A\ox B\ar[r]^-{B\ox A \ox \epsilon_{A\ox B}}&
B\ox A \ar@{->>}[d]\\
&A\ox B\ar[rru]_-{\psi}&&
B\stac \psi A
}
$$
\vskip 1cm

\noindent
and a symmetrical one on the other side.

Let us turn to part (3). By \eqref{eq:inverse} and the axioms of a weak
distributive law, 
$$
\psi\,\psiinv=(\mu \ox A)\, (B\ox \psi)\, (B\ox A\ox \eta)=
(B\ox \mu)\, (\psi\ox A)\, (\eta\ox B\ox A).
$$
Hence associativity of $\mu$ implies that $\psi\,\psiinv$ is a $B$-$A$
bimodule map in the sense that 
\begin{equation}\label{eq:bilin}
(\mu\ox\mu)\,(B\ox \psi\,\psiinv\ox A)=
\psi\,\psiinv\,(\mu\ox\mu).
\end{equation}
This implies
\begin{equation}\label{eq:onleft}
(\mu\ox\mu)\,(B\ox \psi \ox A)=
\psi\,\psiinv\,(\mu\ox\mu)\,(B\ox \psi \ox A).
\end{equation}
Furthermore, by \eqref{eq:inverse} and the axioms of a weak distributive
law,
\begin{eqnarray}\label{eq:onright}
(\mu \ox A)\, (B\ox \psi)\, (\psi\,\psiinv\ox B)&=&
(\mu \ox A)\, (B\ox \psi)\,(\mu\ox A\ox B)\, (B\ox \psi\ox B)\, 
(B\ox A\ox \eta\ox B)\nonumber\\
&=& (\mu \ox A)\,(\mu\ox B \ox A)\,(B\ox B\ox \psi)\, (B\ox \psi\ox B)\, 
(B\ox A\ox \eta\ox B)\nonumber\\
&=&(\mu \ox A)\,(B\ox \mu \ox A)\,(B\ox B\ox \psi)\, (B\ox \psi\ox B)\, 
(B\ox A\ox \eta\ox B)\\
&=&(\mu \ox A)\, (B\ox \psi)\, (B\ox A\ox \mu)\, (B\ox A\ox \eta\ox B)= 
(\mu \ox A)\, (B\ox \psi),\nonumber\\
(B\ox \mu)\, (\psi\ox A)\, (A\ox \psi\psiinv)&=&
(B\ox \mu)\, (\psi\ox A).\nonumber
\end{eqnarray}
Then the compatibility between the multiplication and the comultiplication
follows by commutativity of 
$$
\xymatrix @C=37pt {
(B\stac \psi A)^{\ox 2}\ \ar@{>->}[d]\ar@{>->}[r]&
(B\ox A)^{\ox 2}\ar[r]^-{B\ox \psi\ox A}
\ar@{}[rrrd]|-{\red \eqref{eq:onleft}\eqref{eq:onright}}&
B^{\ox 2}\ox A^{\ox 2}\ar[r]^-{\mu\ox \mu}&
B\ox A \ar@{->>}[r]\ar[rd]^-{\psi\psiinv}&
\ \ \ B\stac \psi A \ar@{>->}[d]\\
(B\ox A)^{\ox 2}\ar[dd]_-{\Delta_{(B\ox A)^{\ox 2}}}
\ar@{}[rdd]|(.28){\red\eqref{eq:quotient}}&
(B\ox A)^{\ox 2}\ar[l]_-{(\psi \psiinv)^{\ox 2}} 
\ar[u]_-{(\psi \psiinv)^{\ox 2}}\ar@{->>}[lu]\ar[r]^-{B\ox \psi\ox A}
\ar[dd]^-{\Delta_{(B\ox A)^{\ox 2}}}
\ar@{}[rdd]|(.29){\red \eqref{eq:psibar}}&
B^{\ox 2}\ox A^{\ox 2}
\ar[d]^-{\Delta_{B^{\ox 2}\ox A^{\ox 2}}}\ar[rr]^-{\mu\ox\mu}
\ar@{}[rrd]|-{\red \eqref{eq:Delta_mu}}&&
B\ox A\ar[dd]^-{\Delta_{B\ox A}}\\
&&(B^{\ox 2}\ox A^{\ox 2})^{\ox 2}
\ar[d]^-{(B\ox \psi\psiinv\ox A)^{\ox 2}}
\ar[rrd]^-{(\mu\ox\mu)^{\ox 2}}
\ar@{}[rd]|(.6){\red\eqref{eq:bilin}}&&\\
(B\ox A)^{\ox 4}\ar@{->>}[d]\ar[rd]^-{(\psi \psiinv)^{\ox 4}}&
(B\ox A)^{\ox 4}\ar[d]^-{(\psi \psiinv)^{\ox 4}}
\ar[r]^-{\raisebox{5pt}{${}_{\,\,(B\ox \psi\ox A)^{\ox 2}}$}}
\ar@{}[rrd]|-{\red \eqref{eq:onleft}\eqref{eq:onright}} &
(B^{\ox 2}\ox A^{\ox 2})^{\ox 2}\ar[r]^-{(\mu\ox\mu)^{\ox 2}}&
(B\ox A)^{\ox 2}&
(B\ox A)^{\ox 2}\ar[l]^-{(\psi \psiinv)^{\ox 2}}\ar@{->>}[d]\\
(B\stac \psi A)^{\ox 4}\ \ar@{>->}[r]&
(B\ox A)^{\ox 4}\ar[r]_-{\raisebox{-5pt}{${}_{(B\ox \psi\ox A)^{\ox 2}}$}}&
(B^{\ox 2}\ox A^{\ox 2})^{\ox 2}\ar[r]_-{(\mu\ox \mu)^{\ox 2}}&
(B\ox A)^{\ox 2}\ar@{->>}[r]
\ar[u]_-{(\psi \psiinv)^{\ox 2}}&
(B\stac \psi A)^{\ox 2}\ar@{>->}[lu]
}
$$ 
The label ``\eqref{eq:Delta_mu}'' means that the square commutes by the weak
bialgebra axiom \eqref{eq:Delta_mu} holding true both in $A$ and $B$. 

The comultiplication takes the unit of $B\ox_\psi A$ to 
\begin{eqnarray}\label{eq:delta1}
&&\big(\xymatrix{
k\ar[r]^-{\eta\ox \eta}&A\ox B\ar[r]^-{\psi}&B\ox A\ar@{->>}[r]&
B\ox_\psi A\ \ar@{>->}[r]&B\ox A \ar[r]^-{\Delta_{B\ox A}}&
(B\ox A)^{\ox 2} \ar@{->>}[r]&
(B\ox_\psi A)^{\ox 2}
}\big)=\nonumber\\
&&\big(\xymatrix{
k\ar[r]^-{\eta\ox \eta}&A\ox B\ar[r]^-{\psi}&B\ox A \ar[r]^-{\Delta_{B\ox A}}& 
(B\ox A)^{\ox 2} \ar@{->>}[r]&
(B\ox_\psi A)^{\ox 2}
}\big).
\end{eqnarray}
By \eqref{eq:inverse}, $\psi\,
\psiinv\,(\eta\ox\eta)=\psi\,(\eta\ox\eta)$. Hence by
\eqref{eq:quotient}, \eqref{eq:delta1} is equal to  
$$
\xymatrix{
k\ar[r]^-{\eta\ox \eta}&B\ox A \ar[r]^-{\Delta_{B\ox A}}& 
(B\ox A)^{\ox 2} \ar@{->>}[r]&
(B\ox_\psi A)^{\ox 2}
}.
$$
On the other hand, by \eqref{eq:psibar}, \eqref{eq:delta1} is equal also
to 
$$
\xymatrix{
k\ar[r]^-{\eta\ox \eta}&A\ox B\ar[r]^-{\Delta_{A\ox B}}& 
(A\ox B)^{\ox 2} \ar[r]^-{\psi^{\ox 2}}&(B\ox A)^{\ox 2} \ar@{->>}[r]&
(B\ox_\psi A)^{\ox 2}
}.
$$
With these identities at hand, the compatibility conditions between the
comultiplication and the unit follow by commutativity of diagrams \eqref{(3)}
and \eqref{segundo} on page \pageref{(3)}. The regions marked by
$\eqref{eq:Delta(1)}$ commute by the weak bialgebra axiom \eqref{eq:Delta(1)},
holding both in $A$ and $B$. 

Making use of \cite[Lemma 1.2]{BCJ}, instead of the weak bialgebra axioms 
\eqref{eq:epsilon_mu}, we will prove that their equivalent forms in
\eqref{eq:alternative_ax} hold in $B\ox_\psi A$. 
In the case of the second equality in \eqref{eq:alternative_ax}, this
means
\begin{eqnarray}\label{eq:2.a}
&&(\epsilon_{B\ox_\psi A}\ox (B\ox_\psi A))\,
(\mu_{B\ox_\psi A}\ox (B\ox_\psi A))\,
((B\ox_\psi A)\ox \Delta_{B\ox_\psi A})=\\
&&(\epsilon_{B\ox_\psi A}\ox (B\ox_\psi A))\,
(\mu_{B\ox_\psi A}\ox \mu_{B\ox_\psi A})\,
((B\ox_\psi A)\ox \Delta_{B\ox_\psi A}\, \eta_{B\ox_\psi A} \ox (B\ox_\psi A)). 
\nonumber
\end{eqnarray}
The commutative diagrams \eqref{eq:diagr_2} and \eqref{eq:diagr_3} on page
\pageref{eq:diagr_2} give rise to the commutative diagram \eqref{eq:diagr_4}
on page \pageref{eq:diagr_4} (the regions marked by \eqref{eq:alternative_ax}
commute since \eqref{eq:alternative_ax} holds both in $A$ and
$B$). Consequently, also diagram \eqref{eq:rhs} on page \pageref{eq:rhs} 
commutes -- whose right-then-down path is equal to the right hand side of
\eqref{eq:2.a}. Finally, also diagram \eqref{eq:lhs} on page \pageref{eq:lhs}
commutes -- whose right-then-down path is equal to the left hand side of 
\eqref{eq:2.a}. Since the down-then-right paths in both diagrams
\eqref{eq:rhs} and \eqref{eq:lhs} on pages  \pageref{eq:rhs} and
\pageref{eq:lhs} are equal, we have \eqref{eq:2.a} proven. 

It is proven symmetrically that also the first equality in
\eqref{eq:alternative_ax} holds in $B\ox_\psi A$. 
\end{proof}

Note that the weak inverse $\psiinv$ of a weakly invertible weakly
  comonoidal weak distributive law $\psi:A\ox B \to B\ox A$ is not
  unique. However, the double crossed product weak bialgebra $B\ox_\psi A$ in
  Theorem  \ref{thm:product_wba} does not depend on the choice of $\psiinv$
  only on its existence. 

\section{The antipode}\label{sec:antip}

For any $k$-module $A$ which carries both an algebra structure $(\mu,\eta)$
and a coalgebra structure $(\Delta,\epsilon)$, the $k$-module of
$k$-linear maps $A\to A$ carries an algebra structure via the convolution
product 
$
\varphi\ast \varphi':=\mu(\varphi\ox \varphi')\Delta
$
and the unit $\eta \epsilon$. Recall from \cite[Lemma 2.5]{BNSz:WHAI} that for
a weak bialgebra $A$, 
\begin{equation}\label{eq:conv_id}
\sqcap^R\ast \sqcap^R=\sqcap^R,\quad
\sqcap^L\ast \sqcap^L=\sqcap^L,\quad
A\ast \sqcap^R=A=\sqcap^L\ast A.
\end{equation}
With this notation, the weak Hopf algebra axioms in \eqref{eq:antip_ax} can be
written as  
$$
S \ast A = \sqcap^R,\quad
A \ast S = \sqcap^L,\quad
S\ast A \ast S=S.
$$

The aim of this section is to prove the following.

\begin{theorem}\label{thm:antipode}
If both weak bialgebras $A$ and $B$ in Theorem \ref{thm:product_wba} are weak
{\em Hopf} algebras then so is the weak wreath product $B\ox_\psi A$, for any
weakly invertible weakly comonoidal weak distributive law $\psi:A\ox B \to B
\ox A$.  
\end{theorem}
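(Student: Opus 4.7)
The plan is as follows: since $B\ox_\psi A$ is already a weak bialgebra by Theorem \ref{thm:product_wba}, the only task is to exhibit an antipode on it. Guided by Majid's formula $S(a\bowtie b)=S(b)S(a)$ for the classical (strict) double cross product, my candidate is the $k$-linear map
$$
\bar S := \psi \circ (S_A \ox S_B) \circ \flip \colon B \ox A \longrightarrow B \ox A,
$$
i.e.\ $\bar S(b \ox a) = \psi(S_A(a) \ox S_B(b))$, where $S_A$ and $S_B$ are the antipodes of $A$ and $B$. Because $\psi = \psi\,\psiinv\,\psi$ by \eqref{eq:inverse}, the image of $\bar S$ sits inside the image of the idempotent $\psi\,\psiinv$; the induced endomorphism $S$ of the retract $B \ox_\psi A$ is the proposed antipode.

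To see that $S$ really is an antipode, it suffices to verify the three convolution relations
$$
S \ast \mathrm{id} = \sqcap^R_{B\ox_\psi A}, \qquad
\mathrm{id} \ast S = \sqcap^L_{B\ox_\psi A}, \qquad
S \ast \mathrm{id} \ast S = S
$$
in the convolution algebra of $B\ox_\psi A$. I would expand each side using the explicit formulas $\mu_{B\ox_\psi A} = (\mu\ox\mu)(B\ox\psi\ox A)$ and $\Delta_{B\ox_\psi A} = (\psi\,\psiinv \ox \psi\,\psiinv)\,\Delta_{B\ox A}$ from the proof of Theorem \ref{thm:product_wba}, producing a long composite on $B^{\ox 2}\ox A^{\ox 2}$ built out of $\psi, \psiinv, \Delta, \mu, S_A, S_B$, and then reduce this composite, by diagram chase, to the projection to $B\ox_\psi A$ of $\sqcap^R_A \ox \sqcap^R_B$ (respectively $\sqcap^L_A \ox \sqcap^L_B$).

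I expect the main obstacle to be the bookkeeping in this reduction, because the comultiplication of $B\ox_\psi A$ already carries two copies of $\psi\,\psiinv$, and $\bar S$ itself begins with a $\flip$ followed by a $\psi$, so the intermediate diagrams contain many entanglements of $\psi, \psiinv$ with the comultiplications and the antipodes. The weak comonoidality conditions \eqref{eq:psibar} and \eqref{eq:psi} should be the decisive lever: they let one push $\psi$ and $\psiinv$ across $\Delta_{A\ox B}$ and $\Delta_{B\ox A}$ at the cost of an extra idempotent, bringing $\Delta$ and the antipodes $S_A \ox S_B$ together inside a single tensor factor where the antipode axioms \eqref{eq:antip_ax} in $A$ and $B$ apply directly. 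The bimodule-map identities \eqref{eq:bilin}, \eqref{eq:onleft}, \eqref{eq:onright} then rearrange the surrounding products, and the counit condition \eqref{eq:counit} closes the computation on the $\sqcap^R, \sqcap^L$ side. The formal symmetry between $(A,S_A,\psi)$ and $(B,S_B,\psiinv)$ should allow the second convolution identity to be obtained from the first by mirroring, and the third identity $S\ast \mathrm{id}\ast S = S$ should drop out of the first two together with the relations among $S, \sqcap^L, \sqcap^R$ and the convolution identities \eqref{eq:conv_id} recalled from \cite{BNSz:WHAI}, rather than requiring a fresh diagram chase.
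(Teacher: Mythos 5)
Your candidate map is exactly the one the paper uses: $\bar S(b\ox a)=\psi(S(a)\ox S(b))$, corestricted to the retract $B\ox_\psi A$, and your plan for the two one-sided convolution identities $\mathrm{id}\ast\bar S=\sqcap^L$ and $\bar S\ast\mathrm{id}=\sqcap^R$ --- a diagram chase through \eqref{eq:psibar}, \eqref{eq:psi}, \eqref{eq:onleft}, \eqref{eq:onright}, \eqref{eq:counit} and the antipode axioms of $A$ and $B$, with the second identity obtained by mirroring --- is the same computation the paper carries out in diagrams \eqref{eq:antipright} and \eqref{eq:antipleft}. (Be aware, though, that $\sqcap^L$ and $\sqcap^R$ of $B\ox_\psi A$ are computed from its own unit $\psi(\eta\ox\eta)$ and counit, and do not simply factorize as projections of $\sqcap^L\ox\sqcap^L$ or $\sqcap^R\ox\sqcap^R$.)

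The genuine gap is in your treatment of the third antipode axiom. You assert that $\bar S\ast\mathrm{id}\ast\bar S=\bar S$ ``drops out'' of the first two identities together with \eqref{eq:conv_id}. It does not: from the first two you only obtain $\bar S\ast\mathrm{id}\ast\bar S=\sqcap^R\ast\bar S=\bar S\ast\sqcap^L$, and nothing in \eqref{eq:conv_id} (which gives $\mathrm{id}\ast\sqcap^R=\mathrm{id}=\sqcap^L\ast\mathrm{id}$, but says nothing about absorbing $\sqcap^L$ on the right of an arbitrary map) lets you conclude that this equals $\bar S$. In a weak Hopf algebra, unlike in a Hopf algebra, a map satisfying the two one-sided identities is not a convolution inverse in the usual sense, is not unique, and need not be the antipode. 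This is precisely why the paper interposes Lemma \ref{lem:antipode}: any weak bialgebra admitting some $Z$ with $\mathrm{id}\ast Z=\sqcap^L$ and $Z\ast\mathrm{id}=\sqcap^R$ is already a weak Hopf algebra, the antipode being the corrected map $S:=Z\ast\mathrm{id}\ast Z=\sqcap^R\ast Z=Z\ast\sqcap^L$, for which the third axiom reduces to $Z\ast\sqcap^L\ast\sqcap^L=Z\ast\sqcap^L$ and hence does follow from \eqref{eq:conv_id}. Your argument is repaired verbatim by proving and invoking this lemma; without it you would have to establish $\bar S\ast\sqcap^L=\bar S$ for your specific candidate by a further diagram chase, which you have not supplied and which the paper deliberately avoids.
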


The proof starts with this.

\begin{lemma}\label{lem:antipode}
For a weak bialgebra $A$, the following assertions are equivalent.
\begin{itemize}
\item[{(1)}] $A$ is a weak Hopf algebra.
\item[{(2)}] There is a (non-unique) linear map $Z:A\to A$ such that 
$$
A\ast Z =\sqcap^L\qquad \textrm{and}\qquad
Z\ast A=\sqcap^R.
$$
\end{itemize}
\end{lemma}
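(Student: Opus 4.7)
The implication (1)$\Rightarrow$(2) is immediate: if $A$ is a weak Hopf algebra, just take $Z := S$, and the required equalities $A\ast S=\sqcap^L$ and $S\ast A=\sqcap^R$ are precisely the first two axioms of the antipode as rewritten after \eqref{eq:conv_id}.

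For (2)$\Rightarrow$(1), the plan is to explicitly construct the antipode as a convolution polynomial in $Z$. The natural candidate, motivated by the identity $S\ast A\ast S=S$, is
\[
S:=Z\ast A\ast Z,
\]
where $A$ denotes the identity map on $A$. Checking the weak Hopf axioms for this $S$ becomes a short calculation that only uses associativity of the convolution algebra on $\mathrm{End}_k(A)$ together with the hypotheses $A\ast Z=\sqcap^L$, $Z\ast A=\sqcap^R$, and the identities $\sqcap^R\ast\sqcap^R=\sqcap^R$, $\sqcap^L\ast\sqcap^L=\sqcap^L$, $A\ast\sqcap^R=A$, $\sqcap^L\ast A=A$ recalled in \eqref{eq:conv_id}.

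Explicitly, one computes
\[
A\ast S=A\ast Z\ast A\ast Z=\sqcap^L\ast(A\ast Z)=\sqcap^L\ast\sqcap^L=\sqcap^L,
\]
and symmetrically
\[
S\ast A=Z\ast A\ast Z\ast A=(Z\ast A)\ast\sqcap^R=\sqcap^R\ast\sqcap^R=\sqcap^R,
\]
so the first two axioms in \eqref{eq:antip_ax} hold. For the third, expand
\[
S\ast A\ast S=Z\ast A\ast Z\ast A\ast Z\ast A\ast Z
\]
and collapse the middle factors using $Z\ast A=\sqcap^R$ twice, then $\sqcap^R\ast\sqcap^R=\sqcap^R$, and finally $A\ast\sqcap^R=A$, to obtain $Z\ast A\ast Z=S$.

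There is essentially no obstacle here beyond spotting the candidate $S=Z\ast A\ast Z$; once this is written down, every step is dictated by the two hypotheses and the four convolution identities in \eqref{eq:conv_id}, so the verification is a handful of lines of associative manipulation in the convolution algebra and requires no fresh input.
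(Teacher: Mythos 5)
Your proposal is correct and follows essentially the same route as the paper: both take $S:=Z\ast A\ast Z$ and verify the three antipode identities by associative manipulation in the convolution algebra using the hypotheses and \eqref{eq:conv_id}. The only cosmetic difference is that the paper first rewrites $S$ as $\sqcap^R\ast Z=Z\ast\sqcap^L$ before checking the axioms, whereas you work with the expanded form directly.
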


\begin{proof}[Proof of Lemma \ref{lem:antipode}]
If (1) holds, then also (2) holds true with choosing $Z$ to be the antipode. 
Conversely, assume that (2) holds and put $S:=Z\ast A\ast Z$.
Note that $S$ can be written in the equivalent forms $S=\sqcap^R\ast Z$ or
$S=Z\ast \sqcap^L$. 
By the identities \eqref{eq:conv_id} we obtain
$$
A\ast S=
A\ast \sqcap^R \ast Z=
A\ast Z=
\sqcap^L
\quad \textrm{and}\quad
S\ast A=
Z\ast \sqcap^L\ast A=
Z\ast A=
\sqcap^R.
$$
Finally, 
$$
S\ast A\ast S=
Z\ast \sqcap^L\ast  \sqcap^L= 
Z\ast \sqcap^L= S. 
$$ 
This proves that $S$ is the antipode, as stated.
\end{proof}

\begin{proof}[Proof of Theorem \ref{thm:antipode}.]
Denote both antipodes in $A$ and $B$ by $S$.
We show that the map 
$$
Z:=\big(
\xymatrix{
B\ox_ \psi A\ \ar@{>->}[r]&
B\ox A \ar[r]^-{\flip}&
A\ox B \ar[r]^-{S\ox S}&
A\ox B\ar[r]^-{\psi}&
B\ox A \ar@{->>}[r]&
B\ox_ \psi A
}\big)
$$
satisfies the properties in part (2) of  Lemma \ref{lem:antipode}. 

Since the first identity in \eqref{eq:wba_id} holds in $B$ and $\sqcap^L=B\ast
S$, 
\begin{eqnarray}\label{eq:antip1}
\\
&&{\white .}\hspace{-.7cm}
(A\ox \mu\ox A^{\ox 2}\ox B)\,
(\flip_{B\ox A,A\ox B}\ox A\ox B)\,
(B\ox A\ox \Delta_{A\ox B})\,
(B\ox A^{\ox 2}\ox \eta)=\nonumber\\
&&{\white .}\hspace{-.7cm}
(A\ox B \ox A^{\ox 2}\ox \mu)\, 
(A\ox B\ox \flip_{A\ox B^{\ox 2},A})\,
(\Delta_{A\ox B}\ox B\ox A)\,
(A\ox B \ox S\ox A)\,
(A\ox \Delta\ox A)\,
\flip_{B\ox A,A}.
\nonumber
\end{eqnarray}
Also, since the second identity in \eqref{eq:wba_id} holds in $A$, 
\begin{eqnarray}\label{eq:antip2}
&&(B\ox \mu\ox B)\,
(B\ox A\ox \flip_{B,A})\,
(B\ox A\ox B\ox \sqcap^L)=\\
&&(\epsilon\ox B\ox A\ox B)\,
(\mu\ox B\ox A\ox B)\,
(A\ox \flip_{B\ox A\ox B,A})\,
(\flip_{B,A}\ox A\ox B\ox A)\,
(B\ox \Delta\ox B\ox A).
\nonumber
\end{eqnarray}
With these identities at hand, diagram \eqref{eq:antipright} on page
\pageref{eq:antipright} is seen to commute. Since also diagram
\eqref{eq:antipleft} on page \pageref{eq:antipleft} commutes and the
down-then-right paths in both diagrams coincide, we conclude that  also
their right-then-down paths are equal; that is the first condition in
part (2) of Lemma \ref{lem:antipode} holds in $B\ox_\psi A$. The second
condition follows symmetrically. 
\end{proof}

\section{Examples}\label{sec:ex}

\subsection{Wreath product of weak bialgebras} 

Since in particular distributive laws themselves are examples of weak
distributive laws, our theory includes wreath products of weak bialgebras --
induced by invertible distributive laws which are coalgebra homomorphisms. 

\begin{example} {\bf Tensor product of weak bialgebras.} For any algebras $A$
  and $B$, the twist map $A\ox B \to B\ox A$, $a\ox b\mapsto b\ox a$ is an
  invertible distributive law. If $A$ and $B$ are weak bialgebras, then it is
  also a coalgebra homomorphism. Hence by Theorem \ref{thm:product_wba}, $B\ox
  A$ is a weak bialgebra with the tensor product algebra and coalgebra
  structures. By Theorem \ref{thm:antipode}, $B\ox A$ is a weak Hopf algebra
  whenever $A$ and $B$ are so. 
\end{example}

\begin{example} {\bf The strictification of weakly equivariant Hopf algebras.}
Consider a group $G$ of finite order $n$. Then for any commutative ring $k$, the
free $k$-module $kG$ is known to be a Hopf algebra, with multiplication
obtained by the linear extension of the group multiplication and letting the
comultiplication act diagonally on the group elements. Recall from \cite{BCM}
that a $k$-algebra $A$ is said to be {\em measured} by $kG$ if there exist
algebra homomorphisms $\varphi_g:A\to A$ for all $g\in G$. Moreover, a {\em
  twisted 2-cocycle} for this measuring is a family of invertible 
elements $c_{g,h}\in A$, for all $g,h\in G$, such that $\varphi_1=\mathsf{Id}$,
$c_{1,g} =1=c_{g,1}$,
$$
\varphi_g\varphi_h=\mathsf{Ad}_{c_{g,h}}\varphi_{gh}\qquad \textrm{and}\qquad
\varphi_g(c_{h,k})c_{g,hk}=c_{g,h}c_{gh,k},\qquad \textrm{for all}\ g,h,k\in G,
$$
(where $\mathsf{Ad}_{c_{g,h}}$ denotes the inner automorphism induced
by $c_{g,h}$). It is easy to see that $\varphi_g$ is then an isomorphism
for all $g\in G$. To these data a weak Hopf algebra was associated in
\cite{MNSch}, where the authors called it as in the title. Our aim is to
describe it as a wreath product.

Denote by $M_n$ the algebra of $n\times n$ matrices with entries in $k$, and
denote the matrix units by $\{e_{g,h}\ \vert\  g,h\in G\}$. Then
$$
\psi:M_n\ox A\to A\ox M_n\qquad
e_{g,h}\ox a\mapsto 
c^{-1}_{g^{-1}h,h^{-1}}\varphi_{g^{-1}h}(a)c_{g^{-1}h,h^{-1}} \ox e_{g,h}
$$
is an invertible distributive law.

Recall that $M_n$ is a weak Hopf algebra via the comultiplication acting
diagonally on the matrix units. 
Assume that $A$ is a Hopf algebra and that the measuring and the twisted
cocycle are compatible with its coalgebra structure in the sense of
\cite[Definition 2.3]{MNSch}. That is, assume that $\varphi_g$ is a
coalgebra homomorphism and $c_{g,h}$ is a grouplike element for all $g,h\in
G$. It is straightforward to see that in this case also $\psi$ is a coalgebra
map if both in the domain and the codomain of $\psi$ the tensor product
coalgebra structure is taken. 

The corresponding double crossed product weak Hopf algebra is
isomorphic to the weak Hopf algebra defined on the $k$-module
$\widehat{kG} \ox A \ox kG$ in \cite{MNSch}, via   
$$
A\ox M_n \to \widehat{kG} \ox A \ox kG,\qquad 
a\ox e_{g,h}\mapsto \widehat{g} \ox a c^{-1}_{g^{-1}h,h^{-1}}\ox g^{-1}h, 
$$
where $\widehat{kG}$ is the $k$-linear dual of $kG$,  and
$\{\widehat{g}\, \vert\, g\in G \}$ is its basis dual to the basis $\{g\in
G\}$ of $kG$.   
\end{example}

\begin{example} {\bf The algebraic quantum torus.}
Assume $k$ to be a field, and let $N$ be a positive integer which is
not a multiple of the characteristic of $k$.  The algebra  $\langle U,V\vert
U^N=1,VU=qUV\rangle$, with $U, V$ invertible and $q \in k$ such that $q^N =
1$, is a weak Hopf algebra -- known as in the title -- via the
comultiplication   
$$ \Delta(U^nV^m)=\frac 1 N \sum_{k=1}^N(U^{k+n}V^m\ox U^{-k}V^m).$$ 
The distinguished subalgebra $\langle U\rangle$ is isomorphic to the
group algebra of the cyclic group of order $N$. It is a weak Hopf algebra via
$\Delta(U^n)=\frac 1 N \sum_{k=1}^N (U^{k+n}\ox U^{-k})$. The
subalgebra $\langle V,V^{-1} \rangle$ is isomorphic to the group algebra of
the additive group of integers. Hence it is a Hopf algebra via
$\Delta(V^m)=V^m\ox V^m$.  

The algebraic quantum torus is a double crossed product of the Hopf
algebra $\langle V,V^{-1} \rangle$ and the weak Hopf algebra $\langle
U\rangle$ with respect to the comonoidal invertible distributive law 
$$ 
\psi:\langle V,V^{-1} \rangle\ox\langle U \rangle \to 
\langle U\rangle\ox\langle V,V^{-1} \rangle,\qquad
V^m\ox U^n\mapsto q^{nm} U^n\ox V^m.
$$
\end{example}

\subsection{Wreath product of bialgebroids over a common separable Frobenius
  base algebra}  

Recall that an algebra $R$ over a commutative ring $k$ is said to be {\em
Frobenius algebra} if there exist a $k$-module map $\pi:R\to k$ and an
element $\sum_i e_i\ox f_i\in  R\ox R$ such that $\sum_i
\pi(re_i)f_i=r=\sum_i e_i\pi(f_ir)$, for all $r\in R$. It follows that for any
element $r$ of a Frobenius algebra $R$, $\sum_i re_i\ox f_i=\sum_i e_i\ox f_i
r$. We say that $R$ is a {\em separable Frobenius algebra} if in addition
$\sum_i e_i\ox f_i$ is a separability element i.e. $\sum_i e_if_i=1$. In this
case the canonical epimorphism $M\ox N \twoheadrightarrow M\ox_R N$ is split
by  
 $$
M\ox_R N \rightarrowtail M\ox N,\qquad
m\ox_R n \mapsto \sum_i m.e_i\ox f_i.n,
$$
for any right $R$-module $M$ and any left $R$-module $N$.
What is more, by \cite[Section 2.4]{BGT} any distributive law in the monoidal
category of bimodules over a separable Frobenius $k$-algebra $R$ determines a
weak distributive law in the monoidal category of $k$-modules, such that
the wreath product induced by the $R$-distributive law is isomorphic to
the weak wreath product induced by the corresponding weak $k$-distributive
law.   

As a generalization of bialgebras from commutative to non-commutative base
rings, bialgebroids were introduced by Takeuchi in \cite{Tak}. Conceptually,
an $R$-bialgebroid $A$ is an $R\ox R^{op}$-ring (i.e. an algebra in the
monoidal category of $R\ox R^{op}$-bimodules), such that the induced
monad $(-) \ox_{R\ox R^{op}} A$ on the category of $R$-bimodules (regarded as the
category of right $R\ox R^{op}$-modules) is a comonoidal monad,
see \cite{Szl:monEM}. 

It was proved by Szlach\'anyi in \cite{Szl:Fields} that a bialgebroid over a
given separable Frobenius $k$-algebra $R$ is precisely the same as a weak
bialgebra over $k$, such that the image of the right projection $\sqcap^R$ is
isomorphic to $R$.  

Let us take weak bialgebras $A$ and $B$ in which  the images
of the right projections are isomorphic as separable Frobenius algebras. 
Let us denote this common base algebra by $R$, and write $R^e := R \ox R^{op}$
for its enveloping algebra. Then $A$ and $B$ are both algebras in the monoidal
category of $R^e$-bimodules via the algebra homomorphisms 
$$
R^e \to A, \quad r\ox l \mapsto r\overline \sqcap^L(l)
\quad \textrm{and} \quad 
R^e \to B , \quad r\ox l \mapsto r\overline \sqcap^L(l)
$$ 
and we may consider a distributive law $\Psi: A\ox_{R^e} B \to
B\ox_{R^e} A$ in the category of $R^e$-bimodules. Since $R$ is a separable
Frobenius algebra, so is $R^e$. So let $\psi$ be the 
corresponding weak distributive law  
$$
\xymatrix{
A\ox B \ar@{->>}[r]&
A \ox_{R^e} B\ar[r]^-\Psi&
B \ox_{R^e} A\ \ \ar@{>->}[r]&
B\ox A}
$$
in \cite[Section 2.4]{BGT}.

\begin{proposition}
In the above setting, if $\Psi$ is an isomorphism (of $
R^e$-bimodules), then $\psi$ is weakly invertible. Moreover, if 
$$
(-)\tensor {R^e}\Psi\ :\ 
(-)\tensor {R^e} \big(A \tensor {R^e} B\big) \to 
(-)\tensor {R^e} \big(B \tensor {R^e} A)
$$ 
is a comonoidal natural transformation then $\psi$ is 
weakly comonoidal.
\end{proposition}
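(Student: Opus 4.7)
The plan is to exploit the factorization of $\psi$ as
\[
A\ox B \twoheadrightarrow A\tensor{R^e}B \xrightarrow{\Psi} B\tensor{R^e}A \hookrightarrow B\ox A,
\]
where the projection--splitting pair is the canonical one coming from the separability element of $R^e$ (which is separable Frobenius because $R$ is). I would define the candidate weak inverse to be the analogous composite built from $\Psi^{-1}$,
\[
\psiinv \ =\ \Big(B\ox A \twoheadrightarrow B\tensor{R^e}A \xrightarrow{\Psi^{-1}} A\tensor{R^e}B \hookrightarrow A\ox B\Big),
\]
which is again a weak distributive law by applying the construction of \cite[Section 2.4]{BGT} to the $R^e$-bimodule distributive law $\Psi^{-1}$. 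Since the composition splitting$\,\circ\,$projection is the identity on $A\tensor{R^e}B$ and $\Psi\Psi^{-1}=\mathrm{id}$, the composite $\psi\psiinv$ collapses to the splitting idempotent $e_{B,A}:B\ox A \twoheadrightarrow B\tensor{R^e}A \hookrightarrow B\ox A$, and similarly $\psiinv\psi = e_{A,B}$.

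To finish the first statement, I need the identification
\[
e_{B,A} \ =\ (\mu\ox A)(B\ox \psi)(B\ox A\ox \eta),
\]
and its mirror image for $e_{A,B}$. The right-hand side, unfolded in terms of the anchor $R^e\to B$, $r\ox l\mapsto r\overline\sqcap^L(l)$, and the identities \eqref{eq:wba_id}, should reproduce the classical action of the $R^e$-separability element on $B\ox A$. This is precisely the content of the correspondence in \cite[Section 2.4]{BGT} between $R^e$-bimodule distributive laws and weak $k$-distributive laws, so this step amounts to repackaging that correspondence at the level of the present weak inverse axioms \eqref{eq:inverse}.

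For the second assertion, comonoidality of the natural transformation $(-)\tensor{R^e}\Psi$ says, at its component on a suitable test bimodule, that $\Psi$ intertwines the tensor-product-over-$R^e$ comultiplications induced by the bialgebroid structures on $A$ and $B$, and likewise for the counits. I would transport this statement from the $\tensor{R^e}$-level to the $\ox$-level by pre- and post-composing with the canonical projections and splittings, using that the projection $B\ox A\twoheadrightarrow B\tensor{R^e}A$ is comultiplicative (the same identity that played the role of \eqref{eq:quotient} in the proof of Theorem \ref{thm:product_wba}). Reading off the three resulting equalities delivers \eqref{eq:psibar}, \eqref{eq:psi}, and \eqref{eq:counit} in turn. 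The main obstacle will be the identification of $e_{B,A}$ with $(\mu\ox A)(B\ox \psi)(B\ox A\ox \eta)$: although it is implicit in \cite[Section 2.4]{BGT}, writing it out requires a careful comparison of the $R^e$-actions defined via $\overline\sqcap^L$ with the formula for the weak wreath product unit appearing in part (1) of Theorem \ref{thm:product_wba}.
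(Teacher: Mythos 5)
Your proposal is correct and follows essentially the same route as the paper: the weak inverse is defined as the composite of the canonical projection, $\Psi^{-1}$, and the separable-Frobenius splitting, the equalities \eqref{eq:inverse} reduce to identifying $\psi\psiinv$ and $\psiinv\psi$ with the canonical idempotents coming from the separability element (the content of \cite[Section 2.4]{BGT}), and weak comonoidality is obtained by transporting the comonoidality of $(-)\tensor{R^e}\Psi$ along these projections and splittings. The paper merely makes the last step more explicit by writing the comonoidality conditions in index notation with the Frobenius elements $e_i\ox f_i$ and $\overline\sqcap^L$; your one point of caution should be that the comultiplicativity of the idempotent $e_{B,A}$ must be derived from the comonoidality of the monads $(-)\tensor{R^e}A$ and $(-)\tensor{R^e}B$ separately (i.e.\ from the bialgebroid structures), not from \eqref{eq:quotient}, which is itself a consequence of what you are proving.
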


\begin{proof}
In terms of the inverse of $\Psi$, the weak inverse of $\psi$ is given by 
$$
\xymatrix{
B\ox A \ar@{->>}[r]&
B \ox_{R^e}A \ar[r]^-{\Psi^{-1}}&
 A \ox_{R^e}B\ \ \ar@{>->}[r]&
A\ox B.}
$$
Recall (e.g. from \cite{McCr}) that comonoidality of $\Psi$ means
commutativity of the following diagrams, for any $R$-bimodules $X$ and $Y$,
$$
\xymatrix{
(X \stac R Y) \stac {R^e} A \stac {R^e} B 
\ar[r]^-{(X\stac R Y) \stac {R^e} \Psi}
\ar[d]_-{\alpha^2_{X,Y}\ox_{R^e} B}&
(X \stac R Y) \stac {R^e} B \stac {R^e} A
\ar[d]^-{\beta^2_{X,Y}\ox_{R^e} A}\\
((X \stac {R^e} A)\stac R (Y\stac {R^e} A))\stac {R^e} B
\ar[d]_-{\beta^2}&
((X \stac {R^e} B)\stac R (Y\stac {R^e} B))\stac {R^e} A
\ar[d]^-{\alpha^2}\\
(X \stac  {R^e} A \stac {R^e} B) \stac R
(Y \stac  {R^e} A \stac {R^e} B)
\ar[r]_-{\raisebox{-8pt}{${}_{(X\stac {R^e} \Psi)\stac R (Y \stac {R^e} \Psi)}$}}& 
(X \stac  {R^e} B \stac {R^e} A) \stac R
(Y \stac  {R^e} B \stac {R^e} A)&\\
R\stac {R^e} A \stac {R^e} B 
\ar[r]^-{R\stac {R^e} \Psi}
\ar[d]_-{\alpha^0 \stac {R^e} B}&
R\stac {R^e} B \stac {R^e} A
\ar[d]^-{\beta^0 \stac {R^e} A}\\
R \stac {R^e} B 
\ar[d]_-{\beta^0}&
R \stac {R^e} A 
\ar[d]^-{\alpha^0}\\
R \ar@{=}[r]&
R}
$$
where the comonoidal structure of $(-)\ox_ {R^e} A$ is given in terms
of the comultiplication $\Delta(a)=a^1\ox a^2$ by  
$$
\alpha^2((x\tensor R y)\tensor {R^e} a)=
(x\tensor {R^e} a^1)\tensor R (y\tensor {R^e} a^2),
\quad\textrm{and}\quad
\alpha^0(r\tensor {R^e} a)=\sqcap^R(ra),
$$
and similarly for $B$. Using the index notation $\Psi(a\ox_ {R^e} b)=a_\Psi
\ox_ {R^e} b_\Psi$, and applying the monomorphisms form the $R$-module tensor
products to the $k$-module tensor products, these conditions read as 
$$
{b_\Psi}^1e_k\overline \sqcap^L(f_l)\ox 
f_k \overline \sqcap^L(e_l){a_\Psi}^1\ox 
{b_\Psi}^2e_i\ox
f_i {a_\Psi}^2=
{b^1}_\Psi e_k\overline \sqcap^L(f_l)\ox 
f_k \overline \sqcap^L(e_l) {a^1}_\Psi\ox 
{b^2}_{\Psi'} e_i\overline \sqcap^L(f_j)\ox 
f_i \overline \sqcap^L(e_j) {a^2}_{\Psi'}
$$
and
$$
\epsilon_A(rae_j)\epsilon_B(f_jbe_i)f_i=
\epsilon_B(rb_\Psi e_j)\epsilon_A(f_ja_\Psi e_i)f_i
$$
for $r\in R$, $a\in A$, $b\in B$, where we omitted the summation signs for
brevity. With these identities at hand, 
$$
\psi(a\ox b)=b_\Psi e_k\overline \sqcap^L(f_l)\ox 
f_k \overline \sqcap^L(e_l) a_\Psi
$$
and its weak inverse are easily seen to obey the weak comonoidality conditions
in Definition \ref{def:weak_comon}. 
\end{proof}

The double crossed product induced by an invertible comonoidal distributive
law $A\ox_{R^e} B \to B\ox_{R^e} A$ lives on $B\ox_{ R^e} A\cong B
e_k\overline \sqcap^L(f_l)\ox f_k \overline \sqcap^L(e_l) A$. 

\begin{example} {\bf The Drinfel'd double.} The Drinfel'd double of a finite
dimensional weak Hopf algebra $H$ is studied in several papers
\cite{Bohm:DH,CWY,Nenciu}. Regarding weak bialgebras as bialgebroids (over
separable Frobenius base algebras), the double construction of weak Hopf
algebras fits Schauenburg's double construction of finite $\times_R$-Hopf
algebras in \cite{Scha:Dual}. That is, it is a double crossed product induced
by an invertible comonoidal distributive law in the bimodule category of
$R^e$, for the base algebra $R:=\sqcap^R(H)$. The corresponding weakly
invertible weakly comonoidal weak distributive law has the explicit form 
$$
H\ox \hat H\to \hat  H\ox H,\qquad
h\ox \alpha \mapsto \alpha_2\ox h_2\langle S(h_1)\vert \alpha_1\rangle
\langle h_3 \vert \alpha_3\rangle
$$
with a weak inverse
$$
\hat  H\ox H\to H\ox \hat H,\qquad
\alpha\ox h \mapsto h_2\ox \alpha_2\langle h_1\vert \alpha_1\rangle
\langle S(h_3) \vert \alpha_3\rangle,
$$
where $\hat H$ stands for the linear dual of $H$ (a weak bialgebra via the
transposed structure)  and $\langle -\vert-\rangle:H\ox \hat H \to k$ is
the evaluation map.
\end{example}

\begin{example} {\bf Matched pairs of groupoids.}\label{ex:matched_pair}
Generalizing matched pairs of groups, matched pairs of groupoids were
introduced and studied by Andruskiewitsch and Natale in \cite{AndNat}. By
their definition, a matched pair of groupoids is a pair of groupoids $\mathcal
H$ and $\mathcal V$ over a common finite base set $\mathcal P$ 
$$
\xymatrix @C=40pt {
{\mathcal V}\ar@<3pt>[r]^-t\ar@<-3pt>[r]_b&
{\mathcal P}&
{\mathcal H},\ar@<3pt>[l]^l\ar@<-3pt>[l]_r}
$$
equipped with maps $\triangleright : {\mathcal H} {}_r\times_t {\mathcal V}
\to {\mathcal V}$ and $\triangleleft : {\mathcal H} {}_r\times_t {\mathcal V}
\to {\mathcal H}$ subject to a number of conditions, see \cite{AndNat}. These
conditions allow for the following interpretation.  

For any field $k$, consider the vector spaces $k{\mathcal H}$ and $k{\mathcal
  V}$ spanned by ${\mathcal H}$ and ${\mathcal V}$, respectively. They can be
equipped with algebra structures by requiring in terms of Kronecker's delta
symbol $h'h=\delta_{r(h'),l(h)}h'\circ h$ and $v'v=\delta_{b(v'),t(v)}v'\circ
v$. Then $k{\mathcal P}$ is a (commutative and separable Frobenius) subalgebra
in both. 
The linear map induced by    
$$
{\mathcal H} {}_r\times_t {\mathcal V}\to 
{\mathcal V} {}_b\times_l {\mathcal H},\qquad
(h,v)\mapsto (h\triangleright v,h\triangleleft v)
$$
is a distributive law $k{\mathcal H} \otimes_{k{\mathcal P}} k{\mathcal V}\to
k{\mathcal V} \otimes_{k{\mathcal P}} k{\mathcal H}$ in the category of
bimodules over $k{\mathcal P}$. It is comonoidal with respect to the
comultiplications acting diagonally on $h\in \mathcal H$ and
$v\in \mathcal V$ (with respect to which $k{\mathcal H}$ and $k{\mathcal
V}$ are weak Hopf algebras). 
It is invertible by implication (1)$\Rightarrow$(3) in \cite[Proposition
2.9]{AndNat}, thus  it gives rise to a weakly invertible weakly
comonoidal weak distributive law  
$$
k{\mathcal H} \otimes k{\mathcal V}\to 
k{\mathcal V} \otimes k{\mathcal H},\qquad
h\otimes v\mapsto \delta_{r(h),t(v)}(h\triangleright v \ox h\triangleleft v).
$$
The induced double crossed product weak Hopf algebra was analyzed
in \cite{AndNat}. 
\end{example}

\subsection{Weak wreath product of categories}
Let ${\mathcal C}$ be a category of finite object set $\mathcal P$. Then as in Example \ref{ex:matched_pair}, the vector space $k{\mathcal C}$ spanned by the morphisms of $\mathcal C$ carries a natural weak bialgebra structure via the multiplication induced by $fg=\delta_{s(f),t(g)}f\circ g$ and comultiplication induced by $g\mapsto g\ox g$, for morphisms $f$ and $g$ in $\mathcal C$, where $s$ and $t$ denote the source and the target maps, respectively.

Let $\mathcal C$ and $\mathcal D$ be categories with a common finite object set $\mathcal P$ and consider a weak distributive law $\mathcal C \mathcal D \to \mathcal D \mathcal C$ in the bicategory of spans as in \cite{Bohm:fact}. Extending it linearly in $k$, we obtain a weak distributive law $k\mathcal C \ox_{k\mathcal P} k\mathcal D \to k\mathcal D \ox_{k\mathcal P} k\mathcal C$ in the category of $k\mathcal P$ bimodules. Now since the algebra  $k\mathcal P$ possesses a separable Frobenius structure, the forgetful functor from the category of its bimodules to the category of $k$-modules possesses a so-called separable Frobenius monoidal structure \cite{Szl:adj}. Such functors were proved to preserve weak distributive laws in \cite{StrMcC}. Therefore it yields a weak distributive law in the category of $k$-modules.

\begin{example} {\bf The blown-up nothing.}
For any positive integer $n$, the algebra of $n\times n$ matrices of entries in a commutative ring $k$, with the comultiplication acting diagonally $e_{ij}\mapsto e_{ij} \ox e_{ij}$ on the matrix units, is a weak Hopf algebra. 
Since its category of modules is trivial -- in the sense that it is
equivalent to the category of $k$-modules -- it was given
in \cite{BSz:LMP} the name in the title. Below we claim that it is a double
crossed product of the sub-weak bialgebras of upper/lower triangle matrices
$U$ and $L$, respectively.  

Both algebras $L$ and $U$ are spanned by morphisms of a category of $n$ objects and precisely one morphism $i\to j$ whenever $i\geq j$ and $i\leq j$, respectively. There is a weak distributive law in the bicategory of spans
$$
(i\geq   j \leq k)\mapsto (i\leq n \geq k).
$$
The corresponding weak distributive law 
$$
\psi: L\ox U\to U\ox L,\qquad e_{ij}\ox e_{lk}\mapsto \delta_{j,l}e_{in}\ox
e_{nk},\qquad \textrm{for}\ i\geq j, l\leq k
$$
in the category of $k$-modules possesses a weak inverse
$$
\psiinv: U\ox L\to L\ox U,\qquad e_{lk} \ox e_{ij}\mapsto \delta_{k,i}e_{l1}\ox
e_{1j},\qquad \textrm{for}\ i\geq j, l\leq k.
$$
They are weakly comonoidal and the corresponding double crossed product weak
bialgebra is that of $n\times n$ matrices, with the matrix units
  $\{e_{in}\ox e_{nk}\vert 1\leq i,k\leq n\}$. 
\end{example}
\vfill 
\eject

\pagestyle{empty}
\appendix
\begin{landscape} 

\section*{appendix: Diagrams}
{\small
\begin{equation}\label{(3)}
\scalebox{.9}{
{\color{white} .}\hspace{-1cm}
\xymatrix @R=10pt @C=5pt{
k \ar[rrr]^-{\eta\ox \eta\ox \eta\ox \eta}\ar[rd]^-{\eta\ox \eta}
\ar[dddddd]_-{\eta\ox \eta}\ar@{}[rrdddddd]|-{\red \eqref{eq:Delta(1)}}
\ar@{}[rrrrdd]|-{\red \eqref{eq:Delta(1)}}&&&
A\ox B \ox B\ox A
\ar[r]^-{\raisebox{6pt}{${}_{\Delta_{A\ox B}\ox \Delta_{B\ox A}}$}}&
(A\ox B)^{\ox 2}\ox (B\ox A)^{\ox 2}
\ar[dd]_-{A\ox B\ox A\ox \mu\ox A\ox B\ox A}
\ar[r]^-{\raisebox{6pt}{${}_{\psi\ox A\ox B\ox (B\ox A)^{\ox 2}}$}}
\ar[rdd]^-{\raisebox{7pt}{${}_{\psi\ox A\ox B\ox (B\ox A)^{\ox 2}}$}}&
B\ox A^{\ox 2}\ox B\ox (B\ox A)^{\ox 2}
\ar[r]^-{\raisebox{6pt}{${}_{B\ox A\ox \psi\ox (B\ox A)^{\ox 2}}$}}&
(B\ox A)^{\ox 4}\ar[dd]_-{\psi\psiinv\ox (B\ox A)^{\ox 3}}\ar@{->>}[r]
\ar[rdd]^-{(\psi\psiinv)^{\ox 4}}&
(B\stac \psi A)^{\ox 4}\ar@{>->}[dd]\\
&B\ox A\ar[rd]^-{\Delta_{B\ox A}}&&&&&&\\
&&(B\ox A)^{\ox 2}\ar[d]_-{B\ox \eta\ox A\ox B\ox A}
\ar[r]^-{\raisebox{6pt}{${}_{\eta\ox (B\ox A)^{\ox 2}}$}} 
\ar@{}[rd]|-{\red \eqref{eq:inverse}}&
(A\ox B)^{\ox 2}\ox A\ar[r]^-{\Delta_{A\ox B}\ox A\ox B\ox A}
\ar[d]^-{\psi\ox A\ox B\ox A} \ar@{}[rdd]|-{\red \eqref{eq:psibar}}&
(A\ox B)^{\ox 3}\ox A\ar[ddd]^-{\psi^{\ox 2}\ox A\ox B\ox A}&
B\ox A^{\ox 2}\ox B^{\ox 2}\ox A\ox B\ox A
\ar[r]^-{B\ox A \ox \psi\ox (B\ox A)^{\ox 2}}
\ar[d]^-{B\ox A^{\ox 2}\ox \mu\ox A\ox B\ox A}&
(B\ox A)^{\ox 4}\ar[d]^-{B\ox A\ox B\ox \psi\ox A\ox B\ox A}
\ar@{}[rddd]|(.7){\red \eqref{eq:onleft}\eqref{eq:onright}}&
(B\ox A)^{\ox 4}\ar[d]^-{B\ox A \ox B \ox \psi\ox A\ox B\ox A}\\
&& B\ox A^{\ox 2}\ox B\ox A\ar[dd]_-{\Delta_{B\ox A}\ox A\ox B\ox A}
\ar[r]^-{\psi\psiinv\ox A\ox B\ox A}
\ar@{}[rdd]|-{\red\eqref{eq:quotient}}&
B\ox A^{\ox 2}\ox B\ox A \ar[d]^-{\Delta_{B\ox A}\ox A\ox B\ox A}&&
B\ox A^{\ox 2}\ox (B\ox A)^{\ox 2}
\ar[ldd]^-{\raisebox{-7pt}{${}_{B\ox A\ox \psi\ox A\ox B\ox A}$}}
\ar@{}[rd]|-{\red \eqref{eq:wdl}}&
B\ox A \ox B^{\ox 2}\ox A^{\ox 2}\ox B\ox A
\ar[d]^-{B\ox A\ox \mu\ox A^{\ox  2}\ox B\ox A}&
B\ox A \ox B^{\ox 2}\ox A^{\ox 2}\ox B\ox A
\ar[dd]^-{B\ox A\ox \mu\ox \mu\ox B\ox A}\\
&&&(B\ox A)^{\ox 2}\ar[d]^-{(\psi\psiinv)^{\ox 2}\ox A\ox B\ox A}&&&
(B\ox A)^{\ox 2} \ox A\ox B\ox A\ar[d]^-{B\ox A\ox B \ox \mu\ox B\ox A}&\\
&&(B\ox A)^{\ox 2} \ox A\ox B\ox A
\ar[r]^-{\raisebox{6pt}{${}_{(\psi\psiinv)^{\ox 2}\ox A\ox B\ox A}$}}
\ar[d]_-{B\ox A\ox B\ox \mu\ox B\ox A}\ar@{}[rrrrd]|-{\red\eqref{eq:bilin}}&
(B\ox A)^{\ox 2} \ox A\ox B\ox A \ar@{=}[r]&
(B\ox A)^{\ox 2} \ox A\ox B\ox A \ar@{=}[urr]&&
(B\ox A)^{\ox 3}\ar[d]^(.4){(B\ox A)^{\ox 2}\ox \psi\psiinv}&
(B\ox A)^{\ox 3}\ar@{->>}[d]
\ar[ld]_(.3){(\psi\psiinv)^{\ox 3}}\\
B\ox A \ar[rr]_{\!\!\!(\Delta_{B\ox A}\ox B\ox A)\Delta_{B\ox A}}&&
(B\ox A)^{\ox 3}\ar[rrrr]_-{(\psi\psiinv)^{\ox 3}}&&&&
(B\ox A)^{\ox 3}&
\ \ (B\stac \psi A)^{\ox 3}\ar@{>->}[l]
}}
\end{equation}
\bigskip

\begin{equation}\label{segundo}
\scalebox{.9}{
{\color{white} .}\hspace{-1cm}
\xymatrix @R=10pt @C=14pt{
k \ar[rrr]^-{\eta\ox \eta\ox \eta\ox \eta}\ar[rd]^-{\eta}
\ar[dddddd]_-{\eta\ox \eta}\ar@{}[rrdddddd]|-{\red \eqref{eq:Delta(1)}}
\ar@{}[rrrrdd]|-{\red \eqref{eq:Delta(1)}}&&&
A\ox B \ox B\ox A
\ar[r]^-{\raisebox{6pt}{${}_{\Delta_{A\ox B\ox B\ox A}}$}}&
(A\ox B^{\ox 2}\ox A)^{\ox 2}
\ar[dd]_-{A\ox B^{\ox 2}\ox \mu\ox B^{\ox 2}\ox A}
\ar[r]^-{\raisebox{6pt}{${}_{\psi\ox B\ox A^{\ox 2}\ox B^{\ox 2}\ox A}$}}
\ar[rdd]^-{\raisebox{7pt}{${}_{\psi\ox B\ox A^{\ox 2}\ox B^{\ox 2}\ox A}$}}&
(B\ox A)^{\ox 2}\ox A\ox B^{\ox 2}\ox A
\ar[r]^-{\raisebox{6pt}{${}_{(B\ox A)^{\ox 2}\ox \psi\ox B\ox A}$}}&
(B\ox A)^{\ox 4}\ar[dd]_-{\psi\psiinv\ox (B\ox A)^{\ox 3}}\ar@{->>}[r]
\ar[rdd]^-{(\psi\psiinv)^{\ox 4}}&
(B\stac \psi A)^{\ox 4}\ar@{>->}[dd]\\
&A\ar[rd]^-{\Delta}&&&&&&\\
&&A^{\ox 2}\ar[d]_-{(\eta\ox A)^{\ox 2}}
\ar[r]^-{\raisebox{6pt}{${}_{A\ox \eta^{\ox 2}\ox A}$}} 
\ar@{}[rd]|-{\red \eqref{eq:inverse}}&
A\ox B^{\ox 2}\ox A\ar[r]^-{\Delta_{A\ox B^{\ox 2}}\ox A}
\ar[d]^-{\psi\ox B\ox A} \ar@{}[rdd]|-{\red \eqref{eq:psibar}}&
A\ox B^{\ox 2}\ox A\ox B^{\ox 2}\ox A\ar[ddd]^-{(\psi \ox B)^{\ox 2} \ox A}&
(B\ox A)^{\ox 2}\ox A\ox B^{\ox 2}\ox A
\ar[r]^-{(B\ox A)^{\ox 2} \ox \psi\ox B\ox A}
\ar[d]^-{B\ox A\ox B\ox \mu\ox B^{\ox 2}\ox A}&
(B\ox A)^{\ox 4}\ar[d]^-{B\ox A\ox B\ox \psi\ox A\ox B\ox A}
\ar@{}[rddd]|(.7){\red \eqref{eq:onleft}\eqref{eq:onright}}&
(B\ox A)^{\ox 4}\ar[d]^-{B\ox A \ox B \ox \psi\ox A\ox B\ox A}\\
&& (B\ox A)^{\ox 2}\ar[dd]_-{\Delta_{B\ox A\ox B}\ox A}
\ar[r]^-{\psi\psiinv\ox B\ox A}
\ar@{}[rdd]|(.3){\red\eqref{eq:quotient}}&
(B\ox A)^{\ox 2}\ar[d]^-{\Delta_{B\ox A\ox B}\ox A}&&
(B\ox A)^{\ox 2}\ox B^{\ox 2}\ox A
\ar[ldd]^-{\raisebox{-7pt}{${}_{B\ox A\ox B\ox \psi\ox B\ox A}$}}
\ar@{}[rd]|-{\red \eqref{eq:wdl}}&
B\ox A \ox B^{\ox 2}\ox A^{\ox 2}\ox B\ox A
\ar[d]^-{B\ox A\ox B^{\ox 2}\ox \mu\ox B\ox A}&
B\ox A \ox B^{\ox 2}\ox A^{\ox 2}\ox B\ox A
\ar[dd]^-{B\ox A\ox \mu\ox \mu\ox B\ox A}\\
&&&(B\ox A\ox B)^{\ox 2}\ox A
\ar[d]^-{(\psi\psiinv\ox B)^{\ox 2}\ox A}&&&
B\ox A\ox B\ox (B\ox A)^{\ox 2} \ar[d]^-{B\ox A\ox \mu\ox A\ox B\ox A}&\\
&&(B\ox A\ox B)^{\ox 2} \ox A
\ar[r]^-{\raisebox{6pt}{${}_{(\psi\psiinv\ox B)^{\ox 2}\ox A}$}}
\ar[d]_-{B\ox A\ox \mu\ox A\ox B\ox A}\ar@{}[rrrrd]|-{\red\eqref{eq:bilin}}&
(B\ox A\ox B)^{\ox 2} \ox A \ar@{=}[r]&
(B\ox A\ox B)^{\ox 2} \ox A \ar@{=}[urr]&&
(B\ox A)^{\ox 3}\ar[d]^-{(B\ox A)^{\ox 2}\ox \psi\psiinv}&
(B\ox A)^{\ox 3}\ar@{->>}[d]
\ar[ld]_(.3){(\psi\psiinv)^{\ox 3}}\\
B\ox A \ar[rr]_{(B\ox A\ox \Delta_{B\ox A})\Delta_{B\ox A}}&&
(B\ox A)^{\ox 3}\ar[rrrr]_-{(\psi\psiinv)^{\ox 3}}&&&&
(B\ox A)^{\ox 3}&
\ \ (B\stac \psi A)^{\ox 3}\ar@{>->}[l]
}}
\end{equation}}
\newpage

\begin{equation}\label{eq:diagr_2}
\xymatrix @C=22pt{
A\ox B^{\ox 2}\ox (A\ox B)^{\ox 2}\ar[rr]^-{A\ox \mu\ox (A\ox B)^{\ox 2}}
\ar[d]_-{A\ox B\ox {\psiinv}\ox B\ox A\ox B}
\ar@{}[rrd]|-{\red\eqref{eq:wdl}}&&
(A\ox B)^{\ox 3}\ar[rrr]^-{\psi\ox A\ox B\ox \psi}
\ar[d]_-{A\ox \psiinv\ox B\ox A\ox B}
\ar[rd]^-{A\ox {\psiinv}^{\ox 2}\ox B}
\ar@{}[rrrd]|-{\red\eqref{eq:wdl}\ {\red \eqref{eq:onright}}}&&&
B\ox A^{\ox 2}\ox B^{\ox 2}\ox A\ar[d]^-{B\ox \mu\ox \mu\ox A}\\
(A\ox B)^{\ox 2}\ox B\ox A\ox B
\ar[r]^-{\raisebox{7pt}{${}_{A\ox \psiinv\ox B^{\ox 2}\ox A\ox B}$}}
\ar[d]_-{(A\ox B)^{\ox 2}\ox \psiinv\ox B}&
A^{\ox 2}\ox B^{\ox 3}\ox A\ox B
\ar[r]^-{\raisebox{6pt}{${}_{A^{\ox 2}\ox \mu\ox B\ox A\ox B}$}}&
A^{\ox 2}\ox B^{\ox 2}\ox A\ox B
\ar[dd]_-{A^{\ox 2}\ox B\ox \psiinv\ox B}
\ar@{}[rrdd]|-{\red\eqref{eq:bilin}}&
A^{\ox 2}\ox B\ox A\ox B^{\ox 2}
\ar[r]^-{\raisebox{6pt}{${}_{\mu\ox B\ox A\ox \mu}$}}&
(A\ox B)^{\ox 2}\ar[r]^-{\psi^{\ox 2}}
\ar[dd]_-{\psiinv\, \psi\ox \ox A\ox B}
\ar@{}[rdd]|-{\red\eqref{eq:counit}}&
(B\ox A)^{\ox 2}\ar[d]^-{\epsilon\ox \epsilon\ox B\ox A}\\
(A\ox B)^{\ox 3}\ox B\ar[d]_-{A\ox \psiinv\ox B\ox A\ox B^{\ox 2}}
&&&&&B\ox A\\
A^{\ox 2}\ox B^{\ox 2}\ox A\ox B^{\ox 2}
\ar[rr]_-{A^{\ox 2}\ox \mu\ox A\ox B^{\ox 2}}&&
A^{\ox 2}\ox B\ox A\ox B^{\ox 2}\ar[rr]_-{\mu\ox B\ox A\ox \mu}&&
(A\ox B)^{\ox 2}\ar[r]_-{A\ox B\ox \psi}&
A\ox B^{\ox 2}\ox A\ar[u]_-{\epsilon\ox \epsilon\ox B\ox A}
}
\end{equation}

\begin{equation}\label{eq:diagr_3}
\xymatrix @R=12pt{
A\ox B^{\ox 2}\ox A\ox B  
\ar[rrr]^-{A\ox B\ox \Delta_{B\ox A}\ox B}
\ar[d]_-{A\ox B\ox \psiinv \ox B}
\ar@{}[rrrrrd]|-{\red\eqref{eq:psi}}&&& 
A\ox B^{\ox 2}\ox (A\ox B)^{\ox 2} 
\ar[rr]^-{A\ox B\ox \psiinv \ox B\ox A\ox B}&& 
(A\ox B)^{\ox 2}\ox B\ox A\ox B     
\ar[d]^-{(A\ox B)^{\ox 2}\ox \psiinv\ox B}\\
(A\ox B)^{\ox 2}\ox B           
\ar[rrr]^-{A\ox B \ox \Delta_{A\ox B}\ox B}
\ar[rdd]^-{\quad A\ox B\ox A\ox \eta \ox B^{\ox 2}}
\ar[dddd]_-{A\ox B\ox A\ox \mu}&&&
(A\ox B)^{\ox 3}\ox B   
\ar[rr]^-{A\ox B\ox \psiinv\psi\ox A \ox B^{\ox 2}}
\ar[dd]^-{A\ox \psiinv\ox B\ox A \ox B^{\ox 2}}
\ar@{}[rrdd]|-{\red\eqref{eq:onright}}&&
(A\ox B)^{\ox 3}\ox B      
\ar[d]^-{A\ox \psiinv\ox B\ox A \ox B^{\ox 2}}\\
&&&&&A^{\ox 2}\ox B^{\ox 2}\ox A\ox B^{\ox 2} 
\ar[d]^-{A^{\ox 2}\ox \mu\ox A\ox B^{\ox 2}}\\
&A\ox B\ox A\ox B^{\ox 3}      
\ar[d]^-{A\ox B\ox \Delta_{A\ox B}\ox B^{\ox 2}}
\ar@{}[rruu]|-{\red \eqref{eq:alternative_ax}}&&    
A^{\ox 2}\ox B^{\ox 2}\ox A\ox B^{\ox 2}
\ar[rr]^-{A^{\ox 2}\ox \mu\ox A\ox B^{\ox 2}}&&
A^{\ox 2}\ox B\ox A\ox B^{\ox 2}          
\ar[d]^-{A^{\ox 2}\ox \epsilon\ox A\ox B^{\ox 2}}\\
&(A\ox B)^{\ox 3}\ox B^{\ox 2}      
\ar[rr]^-{A\ox \psiinv\ox B\ox A\ox B^{\ox 3}}&&
A^{\ox 2}\ox B^{\ox 2}\ox A\ox B^{\ox 3}   
\ar[r]^-{\raisebox{6pt}{${}_{A^{\ox 2}\ox \mu\ox A\ox \mu\ox B}$}}&
A^{\ox 2}\ox B\ox A\ox B^{\ox 2}    
\ar[r]^-{A^{\ox 2}\ox \epsilon\ox A\ox B^{\ox 2}}& 
A^{\ox 3}\ox B^{\ox 2}  
\ar[d]^-{A^{\ox 3}\ox \mu}\\
(A\ox B)^{\ox 2} 
\ar[r]_-{\raisebox{-6pt}{${}_{A\ox B\ox A\ox \eta\ox B}$}}&
A\ox B\ox A\ox B^{\ox 2} 
\ar[r]_-{\raisebox{-6pt}{${}_{A\ox B\ox \Delta_{A\ox B}\ox B}$}}&  
(A\ox B)^{\ox 3}\ox B    
\ar[r]_-{\raisebox{-6pt}{${}_{A\ox \psiinv\ox B\ox A \ox B^{\ox 2}}$}}&
A^{\ox 2}\ox B^{\ox 2}\ox A\ox B^{\ox 2}  
\ar[r]^-{\raisebox{6pt}{${}_{A^{\ox 2}\ox \mu\ox A\ox \mu}$}}
\ar[d]^-{\mu\ox B^{\ox 2}\ox A\ox B^{\ox 2}}&
A\ox (A\ox B)^{\ox 2}   
\ar[r]^-{A^{\ox 2}\ox \epsilon\ox A\ox B}&
A^{\ox 3}\ox B    
\ar[d]^-{\mu\ox A\ox B}\\
&&&(A\ox B^{\ox 2})^{\ox 2}  
\ar[r]_-{(A\ox \mu)^{\ox 2}}&
(A\ox B)^{\ox 2}  
\ar[r]_-{A\ox \epsilon\ox A\ox B}&
A^{\ox 2}\ox B  
}
\end{equation}
\eject

\begin{equation}\label{eq:diagr_4}
\xymatrix{
A\ox B^{\ox 2}\ox A\ox B\ar[rr]^-{A\ox B^{\ox 2}\ox \eta\ox A\ox B}
\ar[d]_-{A\ox B\ox \psiinv\ox B}
\ar[rd]^-{\quad A\ox B\ox \Delta_{B\ox A}\ox B}&&
A\ox B^{\ox 2}\ox A^{\ox 2}\ox B
\ar[rr]^-{A\ox B\ox \Delta_{B\ox A}\ox A\ox B}&&
A\ox B^{\ox 2}\ox A\ox B\ox A^{\ox 2}\ox B
\ar[d]^-{A\ox \mu\ox A\ox B\ox \mu\ox B}\\
(A\ox B)^{\ox 2}\ox B\ar[d]_-{A\ox B\ox A\ox \mu}
\ar@{}[rddd]|-{\red \eqref{eq:diagr_3}}&
A\ox B^{\ox 2}\ox (A\ox B)^{\ox 2}
\ar[r]^-{\raisebox{6pt}{${}_{A\ox\mu\ox (A\ox B)^{\ox 2}}$}}
\ar[d]^-{A\ox B\ox {\psiinv}^{\ox 2}\ox B}&
(A\ox B)^{\ox 3}\ar[r]^-{\psi\ox A\ox B\ox \psi}&
B\ox A^{\ox 2}\ox B^{\ox 2}\ox A\ar[ddd]^-{B\ox \mu\ox \mu\ox A}
\ar@{}[rdd]|-{\red \eqref{eq:alternative_ax}}&
(A\ox B)^{\ox 3}\ar[d]^-{\psi\ox A\ox B\ox \psi}\\
(A\ox B)^{\ox 2}\ar[d]_-{A\ox B\ox A\ox \eta\ox B}&
(A\ox B)^{\ox 3}\ox B\ar[d]^-{A\ox \psiinv\ox B\ox A\ox B^{\ox 2}}&&&
B\ox A^{\ox 2}\ox B^{\ox 2}\ox A\ar[d]^-{B\ox \mu\ox \mu\ox A}\\
A\ox B\ox A\ox B^{\ox 2}\ar[d]_-{A\ox B\ox \Delta_{A\ox B}\ox B}&
A^{\ox 2}\ox B^{\ox 2}\ox A\ox B^{\ox 2}
\ar[d]^-{A^{\ox 2}\ox \mu\ox A\ox B^{\ox 2}}&&&
(B\ox A)^{\ox 2}\ar[d]^-{B\ox \epsilon\ox B\ox A}\\
(A\ox B)^{\ox 3}\ox B\ar[d]_-{A\ox \psiinv\ox B\ox A\ox B^{\ox 2}}&
A^{\ox 2}\ox B\ox A\ox B^{\ox 2}
\ar[r]^-{\raisebox{6pt}{${}_{A^{\ox 2}\ox \epsilon\ox A\ox B^{\ox 2}}$}}
\ar@{}[rruu]|-{\red \eqref{eq:diagr_2}}&
A^{\ox 3}\ox B^{\ox 2}\ar[d]^-{\mu\ox A\ox \mu}&
(B\ox A)^{\ox 2}\ar[r]^-{B\ox \epsilon\ox B\ox A}&
B^{\ox 2}\ox A\ar[d]^-{\epsilon\ox B\ox A}\\
A^{\ox 2}\ox B^{\ox 2}\ox A\ox B^{\ox 2}\ar[r]_-{\mu\ox \mu\ox A\ox \mu}&
(A\ox B)^{\ox 2}\ar[r]_-{A\ox \epsilon\ox A\ox B}&
A^{\ox 2}\ox B\ar[r]_-{\epsilon\ox A \ox B}&
A\ox B\ar[r]_-{\psi}&
B\ox A
}
\end{equation}
\eject

\begin{equation}\label{eq:rhs}
\scalebox{.9}{
{\color{white} .}\hspace{-.7cm}
\xymatrix @C=2pt{
(B\stac \psi A)^{\ox 2}
\ar[rr]^-{(B\ox_\psi A)\ox \eta\ox \eta\ox (B\ox_\psi A)}&&
(B\stac \psi A)\ox B\ox A\ox (B\stac \psi A)
\ar[rrr]^-{(B\ox_\psi A)\ox \Delta_{B\ox A}\ox (B\ox_\psi A)}&&&
(B\stac \psi A)\ox (B\ox A)^{\ox 2}\ox (B\stac \psi A)
\ar@{->>}[r]&
(B\stac \psi A)^{\ox 4}\ar@{>->}[d]\\
(B\ox A)^{\ox 2}
\ar[rr]^-{B\ox A\ox \eta\ox \eta\ox B\ox A}
\ar@{->>}[u]\ar[d]^-{B\ox A\ox \eta\ox B\ox A}\ar@{=}@/_4pc/[ddd]&&
(B\ox A)^{\ox 3}
\ar[rrr]^-{B\ox A\ox \Delta_{B\ox A}\ox B\ox A}&&&
(B\ox A)^{\ox 4} \ar@{->>}[u]
\ar[dl]_-{\psiinv\ox (B\ox A)^{\ox 2}\ox \psiinv\quad}
\ar[d]^-{(B\ox \psi\ox A)^{\ox 2}}
\ar[r]^-{(\psi \psiinv)^{\ox 4}}&
(B\ox A)^{\ox 4}
\ar[d]^-{(B\ox \psi\ox A)^{\ox 2}}\\
B\ox A\ox B^{\ox 2}\ox A
\ar[r]^-{\raisebox{6pt}{${}_{B\ox A\ox B\ox \psiinv}$}}
\ar[dd]^-{B\ox A\ox \mu\ox A}
\ar@{}[rdd]|(.3){\red\qquad\qquad \eqref{eq:wdl} }&
(B\ox A)^{\ox 2}\ox B
\ar[r]^-{\raisebox{6pt}{${}_{\psiinv\ox B\ox A\ox B}$}}
\ar[d]^-{B\ox A\ox \psiinv\ox B}&
A\ox B^{\ox 2}\ox A\ox B
\ar[r]^-{\raisebox{6pt}{${}_{A\ox B^{\ox 2}\ox \eta \ox A\ox B}$}}
\ar[d]^-{A\ox B\ox \psiinv \ox B}
\ar@{}[rrdddd]|-{\red \eqref{eq:diagr_4}}&
A\ox B^{\ox 2}\ox A^{\ox 2}\ox B
\ar[r]^-{\raisebox{6pt}{${}_{A\ox B\ox \Delta_{B\ox A}\ox A\ox B}$}}&
A\ox B^{\ox 2}\ox A\ox B\ox A^{\ox 2}\ox B
\ar[dddd]^-{A\ox \mu\ox A\ox B\ox \mu\ox B}
\ar@{}[rdddd]|(.35){\red \qquad \eqref{eq:wdl}\ {\red \eqref{eq:onright}}}&
(B^{\ox 2}\ox A^{\ox 2})^{\ox 2}
\ar@{}[rddd]|-{\red \eqref{eq:onleft}\eqref{eq:onright}}
\ar[dddd]^-{\mu\ox A^{\ox 2}\ox B^{\ox 2}\ox \mu}& 
(B^{\ox 2}\ox A^{\ox 2})^{\ox 2}
\ar[dd]^-{(\mu\ox \mu)^{\ox 2}}\\
&B\ox A^{\ox 2}\ox B^{\ox 2}\ar[d]^-{B\ox A^{\ox 2}\ox \mu}&
(A\ox B)^{\ox 2}\ox B\ar[d]^-{A\ox B\ox A\ox \mu}&&&&\\
(B\ox A)^{\ox 2}\ar[r]^-{B\ox A\ox \psiinv}&
B\ox A^{\ox 2}\ox B\ar[d]_-{B\ox A^{\ox 2}\ox \eta\ox B}&
(A\ox B)^{\ox 2}\ar[d]^-{A\ox B\ox A\ox \eta \ox B}&&&&
(B\ox A)^{\ox 2}\ar@{->>}[d]\\
&B\ox A^{\ox 2}\ox B^{\ox 2}\ar[d]_-{B\ox A\ox \Delta_{A\ox B}\ox B}&
(A\ox B)^{\ox 2}\ox B\ar[d]^-{A\ox B\ox \Delta_{A\ox B}\ox B}&&&&
(B\stac \psi A)^{\ox 2}\ar@{>->}[d]\\
&B\ox A\ox (A\ox B)^{\ox 2}\ox B
\ar[r]^-{\raisebox{6pt}{${}_{\psiinv \ox (A\ox B)^{\ox 2}\ox B}$}}
\ar[dd]_-{B\ox \mu\ox B\ox A \ox B^{\ox 2}}
\ar@{}[rdd]|-{\red \eqref{eq:wdl}}&
(A\ox B)^{\ox 3}\ox B\ar[d]^-{A\ox\psiinv\ox B\ox A\ox B^{\ox 2}}&&
(A\ox B)^{\ox 3}\ar[r]^-{\psi\ox A\ox B\ox \psi}&
B\ox A^{\ox 2}\ox B^{\ox 2}\ox A
\ar[r]^-{B\ox \mu\ox \mu\ox A}&
(B\ox A)^{\ox 2}\ar[dd]^-{\epsilon\ox \epsilon\ox B\ox A}\\
&&A^{\ox 2}\ox B^{\ox 2}\ox A\ox B^{\ox 2}
\ar[d]^-{\mu \ox B^{\ox 2}\ox A\ox B^{\ox 2}}&&&&\\
&(B\ox A)^{\ox 2}\ox B^{\ox 2}
\ar[r]_-{\psiinv\ox B\ox A \ox B^{\ox 2}}&
(A\ox B^{\ox 2})^{\ox 2}\ar[r]_-{(A\ox \mu)^{\ox 2}}&
(A\ox B)^{\ox 2}\ar[r]_-{A\ox \epsilon\ox A\ox B}&
A^{\ox 2}\ox B\ar[r]_-{\epsilon\ox A\ox B}&
A\ox B\ar[r]^-\psi\ar[d]_-\psi&
B\ox A\ar@{->>}[d]\ar[ld]_-{\psi\psiinv}\\
&&&&&B\ox A&
\ \ B\stac\psi A\ar@{>->}[l]
}}
\end{equation}

\eject
\begin{equation}\label{eq:lhs}
{\color{white} .}\hspace{-.6cm}
\xymatrix @C=10pt{
(B\stac \psi A)^{\ox 2}\ \ \ar@{>->}[rr]&&
(B\ox A)^{\ox 2}\ar[rr]^-{B\ox A\ox \Delta_{B\ox A}}
\ar[d]^-{B\ox A\ox \psiinv}
\ar@{}[rrdd]|-{\red\eqref{eq:psi}}&&
(B\ox A)^{\ox 3}\ar@{->>}[rr]
\ar[d]^-{(B\ox A)^{\ox 2}\ox \psi\psiinv}
\ar[rrd]^-{(\psi\psiinv)^{\ox 3}}&&
(B\stac\psi A)^{\ox 3}\ar@{>->}[d]\\
(B\ox A)^{\ox 2}\ar[dd]_-{B\ox A\ox \psiinv}
\ar@{->>}[u]\ar[rru]^-{(\psi\psiinv)^{\ox 2}}&&
B\ox A^{\ox 2}\ox B\ar[d]^-{B\ox A\ox \Delta_{A\ox B}}&&
(B\ox A)^{\ox 3}\ar@{=}[r]
\ar[d]^-{B\ox A\ox \psiinv\ox B\ox A}&
(B\ox A)^{\ox 3}\ar[d]^-{B\ox \psi\ox A\ox B\ox A}&
(B\ox A)^{\ox 3}\ar[d]^-{B\ox \psi\ox A\ox B\ox A}\\
&B\ox A\ox (A\ox B)^{\ox 2}\ar@{=}[d]
\ar[r]^-{\raisebox{8pt}{${}_{\psi\psiinv\ox (A\ox B)^{\ox 2}}$}}&
B\ox A\ox (A\ox B)^{\ox 2}\ar[rr]^-{B\ox A^{\ox 2}\ox B\ox \psi}&&
B\ox A^{\ox 2}\ox B^{\ox 2}\ox A\ar@{=}[d]
\ar@{}[rdd]|(.35){\red\qquad \eqref{eq:wdl}\ {\red \eqref{eq:onright}}}&
B^{\ox 2}\ox A^{\ox 2}\ox B\ox A\ar[dd]^-{B^{\ox 2}\ox\mu\ox B\ox A}
\ar@{}[rdd]|(.35){\red\qquad \eqref{eq:onleft}\eqref{eq:onright}}&
B^{\ox 2}\ox A^{\ox 2}\ox B\ox A\ar[d]^-{\mu\ox\mu\ox B\ox A}\\
B\ox A^{\ox 2}\ox B\ar[d]_-{B\ox A^{\ox 2}\ox \eta\ox B}
\ar[r]^-{B\ox A\ox \Delta_{A\ox B}}
\ar@{}[rdddd]|(.4){\qquad \red\eqref{eq:alternative_ax}}&
B\ox A\ox (A\ox B)^{\ox 2}
\ar[r]^-{\raisebox{8pt}{${}_{B\ox A^{\ox 2}\ox B \ox \psi}$}}
\ar[d]^-{B\ox \mu\ox B\ox A\ox B}&
B\ox A^{\ox 2}\ox B^{\ox 2}\ox A\ar[d]^-{B\ox \mu\ox B^{\ox 2}\ox A}
\ar[rr]^-{\psi\psiinv\ox A\ox B^{\ox 2}\ox A}
\ar@{}[rrd]|-{\red \eqref{eq:bilin}}&&
B\ox A^{\ox 2}\ox B^{\ox 2}\ox A\ar[d]^-{B\ox \mu\ox B^{\ox 2}\ox A}&&
(B\ox A)^{\ox 2}\ar@{->>}[d]\\
B\ox A^{\ox 2}\ox B^{\ox 2}\ar[d]_-{B\ox A\ox \Delta_{A\ox B}\ox B}&
(B\ox A)^{\ox 2}\ox B\ar[d]^-{\psiinv\ox B\ox A\ox B}&
B\ox A\ox B^{\ox 2}\ox A
\ar[r]^-{\raisebox{8pt}{${}_{\psiinv\ox B^{\ox 2}\ox A}$}}
\ar[dd]^-{\psiinv\ox B^{\ox 2}\ox A}
\ar@{}[rdd]|(.35){\red\qquad \eqref{eq:onleft}}&
A\ox B^{\ox 3}\ox A
\ar[r]^-{\raisebox{8pt}{${}_{\psi\ox B^{\ox 2}\ox A}$}}
\ar[d]^-{A\ox \mu\ox B\ox A}
\ar@{}[rrd]|-{\red\eqref{eq:wdl}}&
B\ox A\ox B^{\ox 2}\ox A
\ar[r]^-{\raisebox{8pt}{${}_{B\ox \psi\ox B\ox A}$}}&
B^{\ox 2}\ox A\ox B\ox A\ar[d]^-{\mu\ox A\ox B\ox A}&
(B\stac\psi A)^{\ox 2}\ar@{>->}[d]\\
B\ox A\ox (A\ox B)^{\ox 2}\ox B\ar[d]_-{B\ox \mu\ox B\ox A\ox B^{\ox 2}}&
A\ox B^{\ox 2}\ox A\ox B\ar[d]^-{A\ox \mu\ox A\ox B}&&
A\ox B^{\ox 2}\ox A\ar[d]^-{\psiinv\psi\ox B\ox A}
\ar[rr]^-{\psi\ox B\ox A}
\ar@{}[rrd]|-{\red \eqref{eq:counit}}&&
(B\ox A)^{\ox 2}\ar[d]^-{\epsilon\ox \epsilon\ox B\ox A}&
(B\ox A)^{\ox 2}\ar[l]_-{B\ox A\ox \psi\psiinv}
\ar[d]^-{\epsilon\ox \epsilon\ox B\ox A}\\
(B\ox A)^{\ox 2}\ox B^{\ox 2}
\ar[d]_-{\psiinv\ox B\ox A\ox B^{\ox 2}}&
(A\ox B)^{\ox 2}\ar[dd]^-{A\ox \epsilon\ox A\ox B}&
A\ox B^{\ox 3}\ox A\ar[r]^-{A\ox \mu\ox B\ox A}&
A\ox B^{\ox 2}\ox A\ar[rr]^-{\epsilon\ox \epsilon\ox B\ox A}&&
B\ox A\ar@{=}[dd]&
B\ox A\ar[l]_-{\psi\psiinv}\ar@{->>}[dd]\\
(A\ox B^{\ox 2})^{\ox 2}\ar[d]_-{(A\ox \mu)^{\ox 2}}&&&&&&\\
(A\ox B)^{\ox 2}\ar[r]_-{\quad A\ox \epsilon\ox A\ox B}&
A^{\ox 2}\ox B\ar[rr]_-{\epsilon\ox A\ox B}&&
A\ox B\ar[rr]_-\psi&&
B\ox A&
\ \ B\stac \psi A\ar@{>->}[l]
}
\end{equation}

\eject
\begin{equation}\label{eq:antipright}
\scalebox{.82}{
{\color{white}.}\hspace{-1.1cm}
\xymatrix @R=15pt @C=2pt{
B\stac\psi A 
\ar[r]^-{\raisebox{8pt}{${}_{(B\ox_\psi A)\ox \eta}$}}&
(B\stac \psi A)\!\ox\! A\ar[rrrrrr]^-{(B\ox_\psi A)\ox A\ox \eta}&&&&&&
(B\stac \psi A)\!\ox\! A\!\ox\! B
\ar[r]^-{\raisebox{8pt}{${}_{(B\ox_\psi A)\ox \psi}$}}&
(B\stac \psi A)\!\ox\! B\!\ox\! A
\ar[r]^-{\raisebox{8pt}{${}_{(B\ox_\psi A)\ox \Delta_{B\ox A}}$}}&
(B\stac \psi A)\!\ox\! (B\!\ox\! A)^{\ox 2}\ar@{->>}[r]&
(B\stac \psi A)^{\ox 3}\ar@{>->}[d]\\
B\!\ox\! A \ar@{->>}[u]\ar[r]^-{B\ox A\ox \eta}\ar[d]^-{\psi\psiinv}&
B\!\ox\! A^{\ox 2}\ar[rrrrrr]^-{B\ox A^{\ox 2}\ox \eta}
\ar[d]^-{\psi\psiinv\ox A}&&&&&&
B\!\ox\! A^{\ox 2}\!\ox\! B\ar[r]^-{B\ox A \ox \psi}&
(B\!\ox\! A)^{\ox 2}
\ar[r]^-{\raisebox{5pt}{${}_{B\ox A \ox \Delta_{B\ox A}}$}}&
(B\!\ox\! A)^{\ox 3}\ar@{->>}[u]\ar[r]^-{(\psi\psiinv)^{\ox 3}}
\ar[d]^-{\psi\psiinv\ox (B\ox A)^{\ox 2}}&
(B\!\ox\! A)^{\ox 3}\ar[dd]_-{\flip\ox B\ox A}\\
B\!\ox\! A \ar[dd]^-{\Delta\ox A}&
(B\!\ox\! A)\!\ox\! (A)\ar[rrrrrr]^-{B\ox A^{\ox 2}\ox \eta}
\ar[dd]^-{\flip}\ar@{}[rrrrrrddd]|-{\red \eqref{eq:antip1}}&&&&&&
B\!\ox\! A^{\ox 2}\!\ox\! B\ar[r]^-{B\ox A\ox \psi}
\ar[d]_-{B\ox A\ox \Delta_{A\ox B}}
\ar@{}[rrd]|-{\red \eqref{eq:psibar}}&
(B\!\ox\! A)^{\ox 2}
\ar[r]^-{\raisebox{5pt}{${}_{B\ox A\ox \Delta_{B\ox A}}$}}&
(B\!\ox\! A)^{\ox 3}\ar[d]^-{(B\ox A)^{\ox 2}\ox \psi\psiinv}&\\
&&&&&&&
(B\!\ox\! A)\!\ox\! (A\!\ox\! B)^{\ox 2}\ar[rr]^-{B\ox A\ox \psi^{\ox 2}}
\ar[d]_-{\flip\ox A\ox B}&&
(B\!\ox\! A)^{\ox 3}\ar[d]^-{\flip\ox B\ox A}&
(B\!\ox\! A)^{\ox 3}\ar[dd]_-{B\ox \psi\ox A\ox B\ox A}\\
B^{\ox 2}\!\ox\! A\ar[dd]^(.3){B\ox \flip}&
A\!\ox\! B\!\ox\! A\ar[d]^(.4){A\ox \Delta\ox A}&&&&&&
A\!\ox\! B^{\ox 2}\!\ox\! A^{\ox 2}\!\ox\! B
\ar[r]^-{\raisebox{6pt}{${}_{A\ox B^{\ox 2}\ox A\ox \psi}$}}
\ar[d]_-{A\ox \mu\ox A^{\ox 2}\ox B}&
A\!\ox\! B\!\ox\! (B\!\ox\! A)^{\ox 2}
\ar[r]^-{\raisebox{8pt}{${}_{\psi\ox (B\ox A)^{\ox 2}}$}}
\ar[ldd]^-{A\ox \mu \ox A\ox B\ox A}
\ar@{}[rdd]|(.3){\red \eqref{eq:wdl}}&
(B\!\ox\! A)^{\ox 3}\ar[d]^-{B\ox \psi\ox A\ox B\ox A}&\\
&
A\!\ox\! B^{\ox 2}\!\ox\! A
\ar[r]^-{\raisebox{5pt}{${}_{A\ox B\ox S\ox A}$}}&
A\!\ox\! B^{\ox 2}\!\ox\! A\ar[rr]^-{\Delta_{A\ox B}\ox B\ox A}
\ar[ddd]_-{\psi\ox B\ox A}\ar@{}[rrddd]|-{\red \eqref{eq:psibar}}&&
A\!\ox\! B\!\ox\! (A\!\ox\! B^{\ox 2})\!\ox\! (A)
\ar[r]^-{\raisebox{6pt}{${}_{A\ox B\ox \flip}$}}
\ar[d]^(.4){A\ox B\ox \psi\ox B\ox A}&
A\!\ox\! B \!\ox\! A^{\ox 2}\!\ox\! B^{\ox 2}
\ar[rr]^-{A\ox B\ox A^{\ox 2}\ox \mu}
\ar[d]^(.4){A\ox B\ox A\ox \psi\ox B}
\ar@{}[rrd]|-{\red \eqref{eq:wdl}}&&
A\!\ox\! B\!\ox\! A^{\ox 2}\!\ox\! B\ar[d]_(.4){A\ox B\ox A\ox \psi}&&
B^{\ox 2}\!\ox\! A^{\ox 2}\!\ox\! B\!\ox\! A
\ar[d]^-{\mu\ox A^{\ox 2}\ox B\ox A}&
B^{\ox 2}\!\ox\! A^{\ox 2}\!\ox\! B\!\ox\! A
\ar[dd]_-{\mu \ox \mu\ox B\ox A}\\
B\!\ox\! A\!\ox\! B\ar[ddd]^-{B\ox \sqcap^L\ox B}&&&&
A\!\ox\! B \!\ox\!(B\!\ox\! A)^{\ox 2}
\ar[d]^(.4){\psi\ox (B\ox A)^{\ox 2}}&
(A\!\ox\! B)^{\ox 3}
\ar[r]^-{\raisebox{6pt}{${}_{(A\ox B)^{\ox 2}\ox \psi}$}}&
(A\!\ox\! B)^{\ox 2}\!\ox\! B\!\ox\! A
\ar[r]^-{\raisebox{5pt}{${}_{A\ox B\ox A\ox \mu\ox A}$}}&
(A\!\ox\! B)^{\ox 2}\!\ox\! A\ar[rr]^-{\psi\ox A\ox B\ox A}&&
B\!\ox\! A^{\ox 2}\!\ox\! B\!\ox\! A\ar@{=}[d]&\\
&&&&
B\!\ox\! A\!\ox\! (B\!\ox\! A\!\ox\! B)\!\ox\! (A)
\ar[r]^-{\raisebox{6pt}{${}_{B\ox A\ox \flip}$}}
\ar@{}[rrrrrd]|-{\red \eqref{eq:onright}}&
B\!\ox\! A\!\ox (A\!\ox\! B)^{\ox 2}
\ar[rr]^-{B\ox A^{\ox 2}\ox B\ox \psi}&&
B\!\ox\! A^{\ox 2}\!\ox\! B^{\ox 2}\!\ox\! A
\ar[rr]^-{B\ox A^{\ox 2}\ox \mu\ox A}&&
B\!\ox\! A^{\ox 2}\!\ox\! B\!\ox\! A \ar@{=}[d]
\ar@{}[rd]|-{\red \eqref{eq:onleft} \eqref{eq:onright}}&
(B\!\ox\! A)^{\ox 2}\ar@{->>}[d]\\
&&
(B\!\ox\! A)^{\ox 2}\ar[rr]^-{\Delta_{B\ox A}\ox B\ox A}
\ar[rd]^-{\ B\ox \Delta\ox B\ox A}
\ar[d]_-{B\ox A\ox B\ox \sqcap^L}&&
B\!\ox\! A\!\ox\! (B\!\ox\! A\!\ox\! B)\!\ox\! (A)
\ar[u]_-{B\ox A\ox \psi\psiinv \ox B\ox A}
\ar[r]^-{\raisebox{6pt}{${}_{B\ox A\ox \flip}$}}&
B\!\ox\! A\!\ox\! (A\!\ox\! B)^{\ox 2}
\ar[rr]^-{B\ox A^{\ox 2}\ox B\ox \psi}
\ar@{}[rrrrddd]|-{\red\mathrm{(counit)}}&&
B\!\ox\! A^{\ox 2}\!\ox\! B^{\ox 2}\!\ox\! A
\ar[rr]^-{B\ox A^{\ox 2}\ox \mu\ox A}&&
B\!\ox\! A^{\ox 2}\!\ox\! B\!\ox\! A\ar[d]^-{B\ox \mu\ox B\ox A}&
(B\stac \psi A)^{\ox 2}\ar@{>->}[d]\\
B\!\ox\! A\!\ox\! B \ar[dd]^-{B\ox A\ox S}&&
(B\!\ox\! A)^{\ox 2}\ar[dd]_-{B\ox A\ox \flip}&
B\!\ox\! A^{\ox 2}\!\ox\! B\!\ox\! A
\ar[r]^-{\raisebox{6pt}{${}_{\flip\ox A\ox B\ox A}$}}
\ar@{}[rrdd]|-{\red\eqref{eq:antip2}}&
A\!\ox\! (B\!\ox\! A\!\ox\! B)\!\ox\! (A)\ar[r]^-{A\ox \flip}&
A^{\ox 2}\!\ox\! B\!\ox\! A\!\ox\! B\ar[d]^-{\mu\ox B\ox A\ox B}&&&&
(B\!\ox\! A)^{\ox 2}\ar[dd]^(.3){\epsilon\ox \epsilon \ox B\ox A}&
(B\!\ox\! A)^{\ox 2}\ar[dd]_-{\epsilon\ox \epsilon \ox B\ox A}\\
&&&&&
(A\!\ox\! B)^{\ox 2} \ar[d]^-{\epsilon\ox B\ox A\ox B}&&&&&\\
B\!\ox\! A\!\ox\! B
\ar[r]^-{\raisebox{6pt}{${}_{\eta\ox B\ox A\ox B}$}}\ar@{=}[d]
\ar@{}[rrrrrrrrrd]|-{\red\eqref{eq:onright}}&
(A\!\ox\! B)^{\ox 2}
\ar[r]^-{\raisebox{6pt}{${}_{\psi\ox A\ox B}$}}&
B\!\ox\! A^{\ox 2}\!\ox\! B\ar[rrr]^-{B\ox \mu\ox B}&&&
B\!\ox\! A\!\ox\! B \ar[rr]^-{B\ox \psi}&&
B^{\ox 2}\!\ox\! A\ar[rr]^-{\mu\ox A}&&
B\!\ox\! A\ar@{=}[d]&
B\!\ox\! A\ar[l]_-{\psi\psiinv}\ar@{->>}[d]\\
B\!\ox\! A\!\ox\! B
\ar[rrrrr]_-{B\ox \psi}&&&&&
B^{\ox 2}\!\ox\! A\ar[rrrr]_-{\mu \ox A}&&&&
B\!\ox\! A&
\ B\stac\psi A \ar@{>->}[l]
}}
\end{equation}

\eject
{\small
\begin{equation}\label{eq:antipleft}
\xymatrix{
B\stac\psi A \ \ar@{>->}[r]&
B\ox A \ar[rr]^-{\Delta_{B\ox A}}
\ar@{}[rrd]|-{\red \eqref{eq:psi}}&&
(B\ox A)^{\ox 2}\ar@{->>}[r]\ar[rd]^-{(\psi\psiinv)^{\ox 2}}&
(B\stac \psi A)^{\ox 2}\ar@{>->}[d]\\
B\ox A\ar@{->>}[u]\ar[ru]^-{\psi\psiinv}\ar[r]^-{\psi\psiinv}&
B\ox A\ar[rr]^-{\Delta_{B\ox A}}\ar[d]_-{\Delta \ox A}&&
(B\ox A)^{\ox 2}\ar[r]^-{\psi\psiinv\ox B\ox A}
\ar[dd]^-{B\ox A\ox \flip}&
(B\ox A)^{\ox 2}\ar[dd]^-{B\ox A\ox \flip}\\
&B^{\ox 2}\ox A \ar[d]_-{B\ox \flip}&&&\\
&B\ox A\ox B\ar[rr]^-{B\ox \Delta \ox B}\ar[d]_-{B\ox \sqcap^L \ox B}&&
B\ox A^{\ox 2}\ox B\ar[d]^-{B\ox A\ox S\ox S}&
B\ox A^{\ox 2}\ox B\ar[d]^-{B\ox A\ox S\ox S}\\
&B\ox A\ox B \ar[dd]_-{B\ox A\ox S}
\ar@{}[rd]|-{\red \eqref{eq:antip_ax}}&
B\ox A^{\ox 2}\ox B\ar[dd]^-{B\ox \mu\ox B}\ar@{=}[r]
\ar@{}[rdddd]|-{\red \eqref{eq:wdl}}&
B\ox A^{\ox 2}\ox B\ar[d]^-{B\ox A \ox \psi}&
B\ox A^{\ox 2}\ox B\ar[d]^-{B\ox A \ox \psi}\\
&&&(B\ox A)^{\ox 2}\ar[r]^-{\psi\psiinv\ox B\ox A}
\ar[dd]^-{B\ox \psi\ox A}
\ar@{}[rddddd]|-{\red \eqref{eq:onleft}\eqref{eq:onright}}&
(B\ox A)^{\ox 2}\ar@{->>}[d]\\
&B\ox A\ox B\ar@{=}[r]&
B\ox A\ox B \ar[dd]^-{B\ox \psi}&&
(B\stac \psi A)^{\ox 2}\ar@{>->}[d]\\
&&&B^{\ox 2}\ox A^{\ox 2}\ar[d]^-{B^{\ox 2}\ox \mu}&
(B\ox A)^{\ox 2}\ar[d]^-{B\ox \psi\ox A}\\
&&B^{\ox 2}\ox A\ar@{=}[r]&
B^{\ox 2}\ox A\ar[dd]^-{\mu\ox A}&
B^{\ox 2}\ox A^{\ox 2}\ar[d]^-{\mu\ox \mu}\\
&&&&B\ox A\ar@{->>}[d]\ar[ld]_-{\psi\psiinv}\\
&&&B\ox A&
\ B\stac \psi A\ar@{>->}[l]
}
\end{equation}
}
\vfill 
\eject

\end{landscape}

\end{document}